\documentclass[12pt]{amsart}
\usepackage{amsthm,amssymb}
\usepackage{epsfig}
\usepackage[arrow]{xy}

\usepackage{amssymb,latexsym,cite,epsf,graphics}

\usepackage[margin=1.0in]{geometry}
\usepackage{graphicx,color}

\definecolor{darkred}{rgb}{1,0,0}

\newtheorem{theorem}{Theorem}[section]

\newtheorem{lemma}[theorem]{Lemma}

\newtheorem{proposition}[theorem]{Proposition}

\newtheorem{corollary}[theorem]{Corollary}

\newtheorem{conjecture}[theorem]{Conjecture}

\theoremstyle{remark}

\theoremstyle{definition}

\newtheorem{remark}[theorem]{Remark}
\newtheorem{example}[theorem]{Example}

\newtheorem*{theorem*}{Theorem}

\def\A{\hat{A}} 
\def\m{\mu_{\rightarrow}}

\def\CC{\mathbb{C}}

\def\SL{\operatorname{SL}}


\title{
Zamolodchikov integrability via rings of invariants
}

\numberwithin{equation}{section}

\begin{document}

\author{Pavlo Pylyavskyy}
\address{\hspace{-.3in} Department of Mathematics, University of Minnesota,
Minneapolis, MN 55414, USA}
\email{ppylyavs@umn.edu}

\date{\today
}

\thanks{Partially supported by NSF grants  DMS-1148634, DMS-1351590, and Sloan Fellowship.
}


\keywords{Cluster algebra, invariant tensors, Zamolodchikov periodicity, integrability}

\begin{abstract}
Zamolodchikov periodicity is periodicity of certain recursions associated with box products $X \square Y$ of two finite type Dynkin diagrams.  
We suggest an affine analog of Zamolodchikov periodicity, which we call Zamolodchikov integrability. We conjecture that it holds for products $X \square Y$, where $X$ is a finite type Dynkin diagram and $Y$ is a bipartite extended Dynkin diagram.
We prove this conjecture for the case of $A_m \square A_{2n-1}^{(1)}$. The proof employs cluster structures in certain classical rings of invariants, previously studied by S.~Fomin and the author. 
\end{abstract}

\ \vspace{-.1in}

\maketitle

\tableofcontents

\section{Introduction}

\subsection{Cluster algebras}

A {\it {quiver}} $Q$ is a directed graph without loops or directed $2$-cycles. Some vertices of $Q$ are declared {\it {mutable}}, others are {\it {frozen}}. For a mutable vertex $z$ of quiver $Q$ one can define {\it {quiver mutation}} at $z$ as follows:
\begin{itemize}
 \item for each pair of edges $y \rightarrow z$ and $z \rightarrow y'$ create an edge $y \rightarrow y'$;
 \item reverse direction of all edges adjacent to $z$;
 \item if some directed $2$-cycle is present, remove both of its edges; repeat until there are no more directed $2$-cycles.
\end{itemize}

One can now define algebraic dynamics of {\it {seed mutations}} as follows. Each vertex $z$ of quiver $Q$ has an associated variable belonging to certain fixed ground field $\mathbb F$. By abuse of notation we also denote this variable $z$.
When $Q$ is mutated at $z$, the variable $z$ changes, with the new value $z'$
satisfying 
$$z z' = \prod_{y \rightarrow z} y + \prod_{z \rightarrow y} {y},$$
where the two products are taken over all edges directed towards $z$ and out of $z$, respectively. 
We denote $\mu_z$ the mutation at $z$, affecting both the quiver and the associated variable. 

  \begin{figure}[ht]
    \begin{center}
\vspace{-.1in}
\epsfig{file=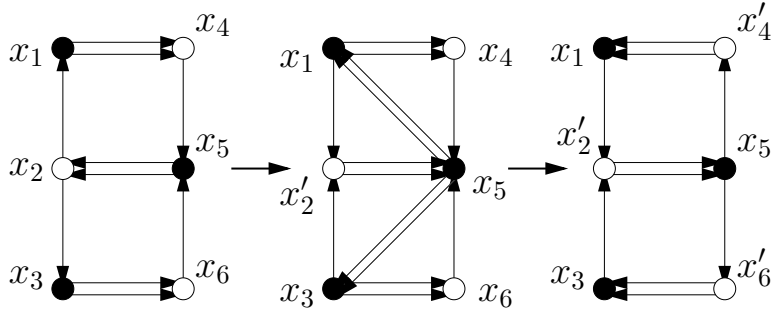, scale=1}
\vspace{-.1in}
    \end{center} 
    \caption{A quiver mutation at $x_2$, followed by mutations at $x_4$ and $x_6$.}
    \label{fig:zint2}
\end{figure}

\begin{example}
 Consider quiver $Q$ on the left in Figure~\ref{fig:zint2}. 
When mutated at the middle vertex on the left, the new variable $x_2'$ is given by
$$x_2' = \frac{x_5^2 + x_1 x_3}{x_2}.$$
\end{example}

\subsection{Box product of quivers}

A quiver is a directed graph. Let $$Q = Q_0 \sqcup Q_1 \text{ and } Q' = Q_0' \sqcup Q_1'$$ be two quivers such that the underlying graph is bipartite. Assume that all edges in them are between $Q_0$ and $Q_1$, respectively, $Q_0'$ and $Q_1'$.  
Following the exposition in \cite{W}, define {\it {box product}} $Q \square Q'$ as follows.

\begin{itemize}
 \item The vertices are pairs $(q,q') \in Q \times Q'$.
 \item For each edge connecting $q'_1$ and $q'_2$ in $Q'$ and each $q \in Q$ an edge connects $(q,q'_1)$ and $(q,q'_2)$;  for each edge connecting $q_1$ and $q_2$ in $Q$ and each $q' \in Q'$ an edge connects $(q_1,q')$ and $(q_2,q')$.
 \item The directions are from $Q_0 \times Q'_0$ to $Q_1 \times Q'_0$, from $Q_1 \times Q'_0$ to $Q_1 \times Q'_1$, from $Q_1 \times Q'_1$ to $Q_0 \times Q'_1$, from $Q_0 \times Q'_1$ to $Q_0 \times Q'_0$.
\end{itemize}

 \begin{figure}[ht]
    \begin{center}
\vspace{-.1in}
\epsfig{file=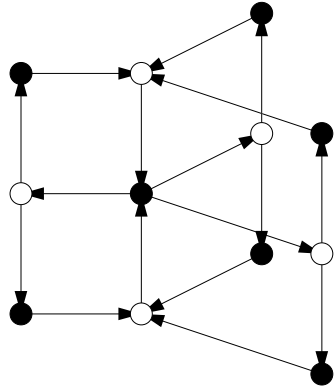, scale=1}
\vspace{-.1in}
    \end{center} 
    \caption{An example of box product $D_4 \square A_3$.}
    \label{fig:zint1}
\end{figure}

An example of box product of two Dynkin diagrams can be seen in Figure~\ref{fig:zint1}.

\subsection{Zamolodchikov periodicity: $T$-system formulation}

In this section we give a brief overview of the $x$-variable, also known as $T$-system formulation of Zamolodchikov periodicity. For more detail and for more traditional $y$-variable formulation we refer the reader to an excellent exposition in \cite{W}.

The following lemma is easily verified from the definition of seed mutation.

\begin{lemma} \label{lem:comm}
 Mutations at two vertices of a quiver that are not connected by an edge commute.
\end{lemma}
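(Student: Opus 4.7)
The plan is to argue that when $z_1$ and $z_2$ are non-adjacent, the mutations $\mu_{z_1}$ and $\mu_{z_2}$ act on essentially disjoint local data, so they trivially commute. I would split the verification into what happens to the variables and what happens to the arrows.

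First I would track the variables. The update rule only changes the variable at the vertex being mutated, so $\mu_{z_1}$ leaves the variable at $z_2$ fixed, and vice versa. For any third vertex $w$, its variable is untouched by either mutation. Therefore the only nontrivial check is that the new value assigned to $z_1$ under $\mu_{z_1}\mu_{z_2}$ coincides with the one assigned under $\mu_{z_1}$ alone, and symmetrically for $z_2$. Since the exchange relation at $z_1$ involves only the variables at $z_1$ and at its neighbors, and $z_2$ is not among these neighbors, the exchange numerator $\prod_{y\to z_1} y + \prod_{z_1\to y'} y'$ is the same whether or not we first ran $\mu_{z_2}$. Hence the final variables agree in either order.

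Next I would track the quiver. The three ingredients of $\mu_{z_1}$ are: creation of composite arrows $y\to y'$ from pairs $y\to z_1\to y'$, reversal of arrows incident to $z_1$, and cancellation of $2$-cycles. Because $z_2$ is neither a predecessor nor a successor of $z_1$, none of these operations touches an edge incident to $z_2$: no new arrow ends at $z_2$, no arrow incident to $z_2$ is reversed, and the $2$-cycle cancellations happen only among arrows between predecessors and successors of $z_1$. In particular, the multiset of arrows incident to $z_2$ in $\mu_{z_1}(Q)$ equals the multiset of arrows incident to $z_2$ in $Q$. Symmetrically the star of $z_1$ is preserved by $\mu_{z_2}$.

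From these two preservation statements it is immediate that $\mu_{z_2}\mu_{z_1}$ and $\mu_{z_1}\mu_{z_2}$ produce the same quiver: arrows incident to $z_1$ are determined by the star of $z_1$ before the $\mu_{z_1}$ step, which is the same in both orders; similarly for $z_2$; and arrows among the remaining vertices receive the same contributions from each of the two mutations regardless of order (the sets of predecessor/successor pairs at $z_1$ and at $z_2$ are unchanged by the other mutation). No step of the argument is really an obstacle; the only care needed is in the $2$-cycle cleanup, where one must notice that any $2$-cycle created by one mutation cannot be destroyed or introduced by the other, precisely because the two stars are disjoint.
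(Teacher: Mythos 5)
Your proof is correct and is exactly the direct verification the paper has in mind (it offers no proof, calling the lemma ``easily verified from the definition of seed mutation''). The one place where your phrasing is slightly off is the closing remark about $2$-cycles: a $2$-cycle \emph{can} arise from the interaction of the two mutations --- e.g.\ $\mu_{z_1}$ may create an arrow $w\to w'$ between two third vertices while $\mu_{z_2}$ creates $w'\to w$, so the cancellation steps actually performed during $\mu_{z_1}$ do depend on whether $\mu_{z_2}$ came first. This does not break the argument, because what matters is the \emph{net} arrow count between each pair of vertices (equivalently, the skew-symmetric exchange-matrix entry), and your key observation --- that the contribution of each mutation to this count depends only on the star of its own vertex, which the other mutation preserves --- shows the two contributions simply add, in either order.
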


Consider the bipartite coloring of $Q \square Q'$ where vertices in $Q_0 \times Q'_0$ and $Q_1 \times Q'_1$ are colored black, while vertices in $Q_1 \times Q'_0$ and $Q_0 \times Q'_1$ are colored white. Define $\mu_+$ and $\mu_-$to be the result of mutating $Q \square Q'$ 
at vertices of a particular color:
$$\mu_+ = \prod_{i \text{ black }} \mu_i, \;\;\; \mu_- = \prod_{i \text{ white }} \mu_i$$
Thanks to Lemma \ref{lem:comm} we do not need to specify the order of mutations in each case, since vertices of the same color are not connected by an edge. 
It is easy to check that the result of applying either $\mu_+$ or $\mu_-$ to $Q \square Q'$ is the same quiver but with directions of arrows reversed. 
An example can be seen in Figure~\ref{fig:zint2}.
The combined map $\mu_+ \mu_-$ then returns the original quiver $Q \square Q'$, including the arrow orientations.

\begin{theorem} \cite{K} \label{thm:zper}
 If $Q$ and $Q'$ are two finite type Dynkin diagrams, and $h$ and $h'$ are the corresponding Coxeter numbers, then $(\mu_- \mu_+)^{h+h'}$ is an identity transformation on the level of cluster variables. 
\end{theorem}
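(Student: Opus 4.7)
The plan is to prove this via categorification. One realizes the cluster algebra associated to $Q \square Q'$ inside a 2-Calabi--Yau triangulated category $\mathcal{C}$, namely the generalized cluster category of the tensor product algebra $kQ \otimes_k kQ'$. In $\mathcal{C}$, seeds correspond to cluster tilting objects, single-vertex quiver mutations correspond to Iyama--Yoshino mutations, and cluster variables are recovered from indecomposable rigid objects via a cluster character which is injective on isomorphism classes of indecomposables. The initial seed then corresponds to a canonical cluster tilting object $T_0$ whose endomorphism algebra has Gabriel quiver $Q \square Q'$.

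The first step is to interpret the bipartite mutation $\mu_+\mu_-$ as a single autoequivalence of $\mathcal{C}$. Because $Q \square Q'$ is bipartite and vertices of the same color are pairwise non-adjacent, Lemma~\ref{lem:comm} makes $\mu_+$ and $\mu_-$ well-defined, and each of them sends $Q \square Q'$ to its opposite, so the composite preserves the quiver. A calculation with silting/cluster-tilting mutations, analogous to the identification of a Coxeter functor with a composite of reflection functors in the hereditary setting, then identifies $\mu_+\mu_-$, up to a shift, with the inverse Auslander--Reiten translation $\tau_{\mathcal{C}}^{-1}$ acting on the summands of cluster tilting objects.

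The second step is to bound the order of the resulting autoequivalence on objects of $\mathcal{C}$. For $Q, Q'$ of finite Dynkin type with Coxeter numbers $h, h'$, the category $\mathcal{C}$ has finitely many indecomposables and a highly structured AR-quiver; a direct analysis of the action of the Coxeter transformation on the corresponding root system shows that every indecomposable lies on a $\tau_{\mathcal{C}}$-orbit whose length divides $h+h'$. Combined with the previous step and the injectivity of the cluster character on indecomposables, this yields $(\mu_-\mu_+)^{h+h'} = \mathrm{id}$ on the level of cluster variables.

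The main obstacle is the categorical identification of the bipartite composite mutation with $\tau_{\mathcal{C}}^{-1}$: one must track carefully how the Ext-quiver of a cluster tilting object transforms under simultaneous mutation at all black, then all white vertices, and verify that the cumulative autoequivalence of $\mathcal{C}$ is indeed the AR-translate composed with the suspension. Equally delicate is the sharp order bound $h+h'$, rather than a naive $\mathrm{lcm}(h,h')$ coming from the two Coxeter elements separately; this sharper bound reflects a special feature of tensor products of Dynkin path algebras and is the technical heart of Keller's argument \cite{K}.
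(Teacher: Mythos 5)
This statement is not proved in the paper at all: it is quoted verbatim from Keller \cite{K}, and the surrounding remarks only survey alternative proofs (Volkov-style arguments via Grassmannians for $A_n\square A_m$, etc.). So there is no in-paper argument to compare against; what you have written is a sketch of Keller's own categorical strategy, and the broad outline (realize the seeds as cluster-tilting objects in a $2$-Calabi--Yau category attached to $kQ\otimes_k kQ'$, identify the bipartite composite mutation with an autoequivalence, bound its order using Coxeter numbers, transport back via a cluster character) is indeed the right one.

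That said, two steps of your sketch are not merely ``delicate'' but incorrect as stated. First, in a $2$-Calabi--Yau triangulated category the Serre functor is $\Sigma^2$, so $\tau_{\mathcal C}\cong\Sigma$; identifying $\mu_+\mu_-$ with ``$\tau_{\mathcal C}^{-1}$ up to a shift'' therefore says it is a power of the suspension, which would force periodicity independent of $h$ and $h'$ and cannot be the right functor. The autoequivalence Keller actually uses is not the ambient AR translation but the \emph{partial} translations $\tau^{-1}\boxtimes\mathbf 1$ and $\mathbf 1\boxtimes\tau^{-1}$ coming from the two tensor factors of the derived category of $kQ\otimes_k kQ'$ (suitably descended to the cluster category), and the identification of $\mu_+\mu_-$ with such a functor is where the bipartite structure enters. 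Second, the period $h+h'$ does not come from ``$\tau_{\mathcal C}$-orbit lengths dividing $h+h'$'' (false in general, and in any case insufficient, since acting as a permutation with orbits of dividing length is weaker than acting as the identity). It comes from the functorial isomorphisms $\tau^{-h/2}\cong\kappa\circ\Sigma$ in $D^b(kQ)$ for each Dynkin factor ($\kappa$ an involution induced by a diagram automorphism, with a square-root-of-$\tau$ device when $h$ is odd), whose combination for the two factors shows that the $(h+h')$-th power of the Zamolodchikov autoequivalence is isomorphic to the identity functor, hence fixes every indecomposable up to isomorphism. Without these two ingredients the sketch does not close; with them, it is essentially Keller's proof.
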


 \begin{figure}[ht]
    \begin{center}
\vspace{-.1in}
\input{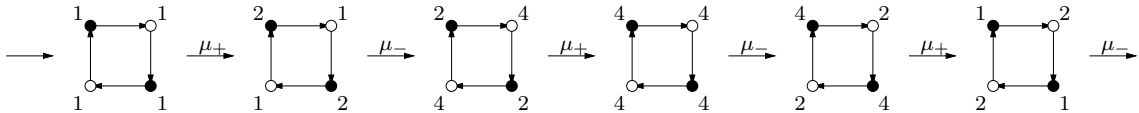} 
\vspace{-.1in}
    \end{center} 
    \caption{An example of Zamolodchikov periodicity for $A_2 \square A_2$.}
    \label{fig:zint3}
\end{figure}

\begin{example}
 Figure~\ref{fig:zint3} shows an example of Zamolodchikov periodicity for type $A_2 \square A_2$. For brevity we started with the initial variables having value $1$, rather than writing formulas for general initial choice of variables. The Coxeter number for $A_2$ root system is $3$. In this case 
 the period of the system happens to be equal to $3$, i.e. half of the $h + h' = 3+3$ predicted by the theorem. 
\end{example}

\begin{remark}
Zamolodchikov periodicity was conjectured in \cite{Z} by Zamolodchikov for $Y$-systems of simply laced Dynkin diagrams. It was generalized by Ravanini-Valleriani-Tateo \cite{RVT}, Kuniba-Nakanishi \cite{KN}, Kuniba-Nakanishi-Suzuki \cite{KNS}, Fomin-Zelevinsky \cite{FZy}. 
Its special cases were proved by Frenkel-Szenes \cite{FS}, Gliozzi-Tateo \cite{GT}, Fomin-Zelevinsky \cite{FZy}, Volkov \cite{V}, Szenes \cite{S}. In full generality it was proved by Keller \cite{K} and later in a different way by  Inoue-Iyama-Keller-Kuniba-Nakanishi \cite{IIKKN1, IIKKN2}.
We refer the reader to \cite{W} for more details. 
 \end{remark}

\begin{remark}
This result is usually stated in terms of $y$-variable dynamics, see \cite{FZ1, FZ4} for definitions. We make the following remarks about the $x$-variable formulation. 
\begin{itemize}
 \item The $x$-variable formulation implies the $y$-variable formulation. This is seen using the explicit formulas for $y$-dynamics derived by Fomin and Zelevinsky in terms of F-polynomials, see \cite[Proposition 3.9]{FZ4}.
 \item In type $A_n \square A_m$ an elegant proof can be given using cluster structure in Grassmannians. This proof in essence lifts Volkov's argument for $y$-variable case \cite{V} to the level of $x$ variables. The details of the proof will appear in \cite{FWZ}, 
 cf. \cite{FMSRI}. At the moment, an argument which is very close in nature can be found in \cite{GR}.
 \item A different proof for $A_n \square A_m$ types can be found in \cite{DK}.  
\end{itemize}
\end{remark}

\subsection{Zamolodchikov integrability}

 Call a sequence $a_{n}, n \in \mathbb Z$ {\it {linearizable of order $k$}} if it satisfies a linear recurrence relation
 $$a_{n} = c_1 a_{n-1} + \ldots + c_{k} a_{n-k},$$
 for some fixed choice of $k$ and constant coefficients $c_i$.
Let  $M_{\ell, K} = \{m_{ij}\}$ be a $K \times K$ Toeplitz matrix with $m_{ij} = a_{\ell+i-j}$. 
The following lemma is easy to verify.

\begin{lemma}
If $a_{n}$ is linearizable, then for any $K > k$ we have $\det(M_{\ell, K})=0$ for any $\ell \in \mathbb Z$. 
\end{lemma}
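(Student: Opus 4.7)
The plan is to exhibit an explicit linear dependence among the columns (equivalently rows) of $M_{\ell,K}$ coming directly from the recurrence. Recall that $m_{ij} = a_{\ell+i-j}$, so column $j$ of $M_{\ell,K}$ is the vector $(a_{\ell+1-j}, a_{\ell+2-j}, \ldots, a_{\ell+K-j})^T$. The key observation I would use is that the recurrence $a_n = c_1 a_{n-1} + \cdots + c_k a_{n-k}$ depends only on the shift $n \mapsto n-1$, and columns of $M_{\ell,K}$ are indexed precisely by such shifts.

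Concretely, I would fix any column index $j_0$ with $j_0 + k \le K$ (possible since $K > k$), and apply the recurrence entrywise to the $i$-th entry of column $j_0$:
$$m_{i,j_0} = a_{\ell+i-j_0} = c_1 a_{\ell+i-j_0-1} + c_2 a_{\ell+i-j_0-2} + \cdots + c_k a_{\ell+i-j_0-k} = \sum_{s=1}^{k} c_s\, m_{i,\, j_0+s}.$$
Since this holds uniformly in $i = 1, \ldots, K$, column $j_0$ is a fixed linear combination of the $k$ subsequent columns $j_0+1, \ldots, j_0+k$, all of which lie in the matrix. Hence the columns of $M_{\ell,K}$ are linearly dependent, so $\det(M_{\ell,K}) = 0$.

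There is no real obstacle here: the content is simply the fact that a constant-coefficient linear recurrence on a sequence translates into the same linear relation on any set of $k+1$ consecutive shifted copies of that sequence, which is exactly what the columns of a Toeplitz matrix of width $> k$ provide. One could equally well perform the computation on rows by noting $m_{ij}$ depends on $i$ only through the shift $i \mapsto i+1$, but the column version above is marginally cleaner since the recurrence is naturally written in terms of decreasing $n$. The statement is independent of $\ell$ because the recurrence is translation-invariant.
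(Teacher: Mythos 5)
Your argument is correct: the entrywise identity $m_{i,j_0}=\sum_{s=1}^{k}c_s\,m_{i,j_0+s}$ exhibits column $j_0$ as a fixed linear combination of the $k$ following columns (all present since $K>k$), so the determinant vanishes. The paper leaves this lemma as an easy verification without proof, and your argument is exactly the standard one intended.
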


\begin{example}
 The sequence $\{F_i\}_{i \geq 1}$ of Fibonacci numbers $F_{i} = F_{i-1} + F_{i-2}$, $F_1 = F_2 = 1$ is linearizable, and it satisfies 
$$ \det
\left(
\begin{matrix}
F_{i+2} &  F_{i+3}  & F_{i+4}\\
F_{i+1}  &  F_{i+2} & F_{i+3}\\
F_i  &   F_{i+1}  & F_{i+2}
\end{matrix} \right) = 0
$$
 for any $i$.  Fibonacci numbers are linearizable of order $2$, but (as it is easy to check) not of order $1$. 
\end{example}

Now, assume we have a field automorphism $\mu$ acting on the field $\mathbb F$. 
We say that $\mu$ is {\it {linearizable}} at $x \in \mathbb F$ if  the sequence $\ldots, \mu^{-1}(x), x, \mu(x), \mu^2(x), \ldots$ is linearizable. 
If $\mathbb F$ comes with a distinguished choice of generators, for example variables of the initial cluster of some cluster algebra, we say that $\mu$ is 
{\it {linearizable}} if it is linearizable at each of the distinguished generators. In cases when $\mu = \mu_- \mu_+$ for some quiver, we shall say that the quiver is 
{\it {Zamolodchikov integrable}}.

\begin{remark}
 The idea of linearizability as integrability is certainly not new, even in the cluster context - see for example \cite{DK, FH}. One can aim at this property in a very general setting of Nakanishi's generalized $T$-systems \cite{N}.
 We introduce term {\it {Zamolodchikov integrability}} for the following reason. In the most general setting of Nakanishi's $T$-systems it seems hard to even conjecture when linearizability holds. On the other hand, 
 when one restricts oneself to bipartite dynamics a la Zamolodchikov, precise classification results can be stated, see Conjecture \ref{conj:lim}. The name of Zamolodchikov seems to be the most natural choice to capture this nice special case with a single term. 
\end{remark}

\begin{conjecture} \label{conj:main}
  If $X$ is a finite type Dynkin quiver and $Y$ is an affine type extended Dynkin quiver (with bipartite underlying graph), then the map $\mu_- \mu_+$ defined above is linearizable 
  when acting on the field of rational functions in the initial cluster variables of quiver $X \square Y$.  
\end{conjecture}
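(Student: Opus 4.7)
The plan is to prove the conjecture in the case $A_m \square A_{2n-1}^{(1)}$ by embedding the cluster algebra attached to the quiver $A_m \square A_{2n-1}^{(1)}$ into a classical ring of $\SL$-invariants. The first step is to identify the initial cluster with a distinguished seed in such a ring. For $A_m \square A_n$ the corresponding ring is the Plücker ring of a Grassmannian, and the relevant cluster structure is well known. The affine analogue $A_m \square A_{2n-1}^{(1)}$ should correspond to invariants of a \emph{cyclic} configuration of vectors in an $(m+1)$-dimensional space, or equivalently to the ring of $\SL_{m+1}$-invariants of functions on an affine cone whose natural symmetry group includes a cyclic shift of order $2n$; this is exactly the setting of the rings of invariants previously investigated by Fomin and the author.

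Second, I would give a geometric description of the involutions $\mu_+$ and $\mu_-$, and hence of their composition $\mu = \mu_-\mu_+$, as a natural automorphism of the invariant ring. In the finite case $A_m \square A_n$, the analogous map is (up to a twist) a cyclic rotation of the columns, which has finite order, yielding Theorem~\ref{thm:zper}. In the affine case one expects $\mu$ to correspond to a shift of infinite order, explaining why periodicity fails; however, $\mu$ should still preserve a natural $\mathbb Z$-grading by multidegree, so that each cluster variable $x$ sits inside a single graded component $V_d$.

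Third, I would exploit finite-dimensionality of the graded pieces. Each graded component $V_d$ of the invariant ring is finite-dimensional (by classical invariant theory, the ring of invariants is finitely generated, and the grading is by a cone in $\ZZ^k$ so each slice is finite-dimensional). Since the orbit $\{\mu^j(x)\}_{j \in \ZZ}$ lies entirely in $V_d$, this orbit spans a finite-dimensional $\mu$-invariant subspace of $V_d$, and Cayley--Hamilton applied to the restriction of $\mu$ to this subspace produces a linear recurrence satisfied by $\mu^j(x)$. This is exactly linearizability in the sense of the definition preceding the conjecture, and the Toeplitz determinant vanishing follows from the lemma on linearizable sequences.

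The main obstacle will be step one: matching the quiver $A_m \square A_{2n-1}^{(1)}$ precisely with a seed in a naturally occurring ring of invariants, and verifying that the cluster variables of that seed really are $\SL$-invariants of controlled multidegree. A secondary difficulty is step two, where one must check that $\mu_-\mu_+$ does correspond to a degree-preserving operation and not merely a birational transformation of the cluster torus. Once these identifications are in place, the finite-dimensionality argument in step three is essentially automatic. For the general conjecture (arbitrary finite $X$ and bipartite extended Dynkin $Y$), the same philosophy should apply, but identifying the right ring of invariants in types beyond $A$ is itself a significant problem.
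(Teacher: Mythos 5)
Your high-level strategy coincides with the paper's: realize the initial seed of $A_m \square A_{2n-1}^{(1)}$ inside a classical ring of $\SL_{m+1}$-invariants (the paper uses $R_{0,2n,1}=\CC[V^{2n}\times \SL(V)]^{\SL(V)}$, i.e.\ $2n$ vectors on the two boundary circles of an annulus plus one matrix $A$ attached to a cut), interpret $\mu_-\mu_+$ geometrically (as evolution of a narrow triangulation, with $(\mu_-\mu_+)^n$ acting by a double Dehn twist), and deduce a linear recurrence from a finite-dimensionality statement. However, your step three contains a genuine gap. The graded component $V_d$ you invoke is not finite-dimensional, and the orbit does not stay in a finite-dimensional piece for the reason you give: each application of $(\mu_-\mu_+)^n$ sends $X_{i,j}^{\alpha,\beta}$ to $X_{i-n,j+n}^{\alpha,\beta}=\langle A^{-1}v_i,\ldots,A^{-1}v_{i+\alpha-1},Aw_j,\ldots,Aw_{j-\beta+1}\rangle$, so while the multidegree in the vector variables is preserved, the degree in the entries of $A$ grows without bound. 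If you grade only by vector-multidegree, the degree-zero piece already contains the infinite-dimensional algebra $\CC[\SL(V)]^{\SL(V)}$; if you include the $A$-degree in the grading, the pieces are finite-dimensional but $\mu$ does not preserve them. So ``the orbit lies in a finite-dimensional graded component, apply Cayley--Hamilton to $\mu$'' does not go through as stated.

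The finite-dimensionality that actually makes the argument work is more specific and is the real content of the proof: the Dehn twist inserts a Kronecker factor $\A^{t}\otimes\cdots\otimes\A^{t}$ (with $j=\min(\alpha,\beta)$ factors, $\A=A^{-2}$) between a fixed $v$-part and a fixed $w$-part of the tensor diagram, and Cayley--Hamilton applied to the single matrix $\A\in\SL_{m+1}$ shows that the span of these tensor powers inside $\operatorname{End}(V^{\otimes j})$ is finite-dimensional; antisymmetry then cuts the bound down to $n{m+1\choose j}$. Your proposal is missing this mechanism, and it is not recoverable from general statements about graded invariant rings. Two further points you would also need to supply: (i) the variables of the initial seed must be proven algebraically independent, so that linearizability established in the invariant ring transfers to the field of rational functions in the initial cluster variables (the paper devotes a separate section to this); and (ii) to linearize at all distinguished generators, one uses that every generator of $R_{0,2n,1}$ admits such a ``$v$-part / Kronecker part / $w$-part'' decomposition, together with the closure of linearizable sequences under sums and products.
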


\begin{example} \label{ex:1}
 As one starts applying operators $\mu = \mu_- \mu_+$ to the quiver $A_3 \square A_1^{(1)}$ in Figure~\ref{fig:zint2}, one obtains at vertices $x_1$, $x_3$, $x_4$ and $x_6$ sequences that are linearizable of order $4$. 
 For example, if one starts with all variables equal to $1$, then $$x_1, \mu(x_1), \mu^2(x_1), \ldots = 1, 2, 22, 377, 7110, 136513, 2629418, 50674318, 976694489, \ldots$$ and 
 $$ \det
\left(
\begin{matrix}
7110 &  136513  & 2629418 &  50674318  & 976694489 \\
377 &  7110  & 136513 &  2629418  & 50674318 \\
22 &  377  & 7110 &  136513  & 2629418 \\
2 &  22  & 377 &  7110  & 136513 \\
1 &  2  & 22 &  377  & 7110 
\end{matrix} \right) = 0
$$
\end{example}

Note that different nodes of the quiver may be linearizable of different orders, as one can check nodes $x_2$ and $x_5$ are {\it {not}} linearizable of order $4$ in Example \ref{ex:1}. 
This differs from Zamolodchikov periodicity, where periods of all nodes coincide. 

Now we are ready to state our main theorem.

\begin{theorem} \label{thm:main}
Conjecture \ref{conj:main} holds in the case of quivers $A_m \square A_{2n-1}^{(1)}$. 
\end{theorem}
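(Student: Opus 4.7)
The plan is to imitate, in the affine setting, the Grassmannian proof of finite type Zamolodchikov periodicity for $A_m \square A_{m'}$ (mentioned in the remark after Theorem \ref{thm:zper}). In that proof one identifies the initial cluster variables with Plücker coordinates on $\operatorname{Gr}(m+1, m+m'+2)$, realizes $\mu_- \mu_+$ as a shift of the column index, and deduces the result from the finite order of this shift. Here we want to replace the Grassmannian by an \emph{affine} (periodic) analog whose cluster structure has $A_m \square A_{2n-1}^{(1)}$ as an initial seed, and replace ``finite order of shift'' by ``the shift satisfies a polynomial equation,'' yielding linearizability instead of periodicity.

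The first step is to locate the correct ring of invariants $R$. A natural candidate, and the one I would use, is the ring of $\SL_{m+1}$-invariants of sequences of vectors $v_i \in \mathbb{C}^{m+1}$ ($i \in \mathbb{Z}$) satisfying a twist condition $v_{i+2n} = T v_i$ for a fixed $T \in \SL_{m+1}$, so that the generators are Plücker-like minors $\Delta_{i_1,\dots,i_{m+1}} = \det(v_{i_1}\,|\cdots|\,v_{i_{m+1}})$. This kind of ring is precisely the type of classical invariant ring whose cluster structure has been studied in earlier work of Fomin and the author. The second step is to write down an explicit initial cluster in $R$ indexed by the vertices of $A_m \square A_{2n-1}^{(1)}$, where the $m$-direction corresponds to the ``row index'' $1 \le r \le m$ recording the size of a window, and the cyclic $2n$-direction corresponds to the position of the window modulo $2n$; and then to verify by direct quiver computation that the exchange quiver at this cluster is exactly $A_m \square A_{2n-1}^{(1)}$.

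The third step is to identify $\mu_- \mu_+$ with the geometric shift $v_i \mapsto v_{i+1}$ on configurations. As in the Grassmannian story, the bipartite mutation flip sends each Plücker-like minor $\Delta_{I}$ to $\Delta_{I+1}$ up to the appropriate frozen factors, because each local mutation is a three-term Plücker relation in disguise. Once this is established, the fourth step is immediate: since $T$ satisfies its minimal polynomial $p(T)=0$ of degree $d \le m+1$, the shift operator acting on the sequence of vectors $\{v_i\}$ satisfies the relation $p(S^{2n})=0$, where $S$ denotes the one-step shift. Multilinearity of the determinant then forces each of the orbits $k \mapsto \Delta_{I+k}$ to satisfy a linear recurrence with constant coefficients whose order is bounded explicitly in $m$, $n$, and $d$. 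In particular, the orbit of every initial cluster variable under $\mu_- \mu_+$ is linearizable, which is exactly the content of the theorem.

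The main obstacle I anticipate is the second step: pinning down the correct initial cluster inside $R$ and verifying that its quiver is literally $A_m \square A_{2n-1}^{(1)}$, with the right arrow orientations on both the mutable and frozen parts. In the finite Grassmannian case the analogous check is a combinatorial exercise but still requires a careful choice of rectangles/windows; here the affine direction introduces monodromy coming from the twist $T$, and matching this monodromy with the cyclic bipartite structure of $A_{2n-1}^{(1)}$ is where the proof has to be done carefully. A secondary difficulty is that, unlike the Zamolodchikov periodicity case where all cluster variables have the same period, here different vertices produce recurrences of different orders (as remarked after Example \ref{ex:1}), so the bookkeeping must track each vertex separately and produce a sharp enough bound from the Cayley--Hamilton argument to account for this.
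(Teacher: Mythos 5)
Your proposal follows essentially the same route as the paper: the paper realizes the initial seed in the invariant ring $R_{0,2n,1}$ of $2n$ vectors on an annulus twisted by a matrix $A\in\SL_{m+1}$ (equivalent to your $\mathbb Z$-indexed sequence with $v_{i+2n}=Tv_i$), shows via Pl\"ucker relations that $\mu_-\mu_+$ shifts the window indices (so that $(\mu_-\mu_+)^n$ is a Dehn twist inserting tensor powers of $A^{\pm 2}$), and concludes by Cayley--Hamilton exactly as you propose. The one ingredient you do not mention is the algebraic independence of the initial cluster variables (the paper's Theorem~\ref{thm:aind}, proved in Section~\ref{sec:aind}), which is needed to transfer linearizability from this particular realization back to the field of rational functions in generic initial variables, as Conjecture~\ref{conj:main} requires.
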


In fact, we can give a bound on the order of linearizability. Let $q$ be the $j$-th vertex in $A_m$ counting from one of the ends, and let $q'$ be any vertex of $A_{2n-1}^{(1)}$. 

\begin{theorem} \label{thm:spd}
In the case of quiver $A_m \square A_{2n-1}^{(1)}$, vertex $(q,q')$  is linearizable of order $$n \cdot {m+1 \choose j}.$$  
\end{theorem}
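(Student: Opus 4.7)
The plan is to realize the cluster algebra of the quiver $A_m \square A_{2n-1}^{(1)}$ inside a classical ring of $SL_{m+1}$-invariants, leveraging cluster structures on invariant rings studied by Fomin and the author. Concretely, one considers $SL_{m+1}$-invariant polynomial functions on a cyclic configuration of $2n$ decorated vectors (or flags) in $\mathbb{C}^{m+1}$, and identifies an initial seed of this cluster algebra whose exchange quiver is isomorphic to $A_m \square A_{2n-1}^{(1)}$. The cluster variable attached to vertex $(q,q')$, where $q$ is the $j$-th node of $A_m$ and $q'$ is a node of the cyclic $A_{2n-1}^{(1)}$, should be represented by a classical invariant built from a $j$-fold wedge of $j$ consecutive vectors in the cyclic window centered at $q'$, paired against complementary data. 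Such an invariant naturally takes values in the $j$-th fundamental representation $\wedge^j \mathbb{C}^{m+1}$ of $SL_{m+1}$, which has dimension $\binom{m+1}{j}$.

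Next I would reinterpret the composite mutation $\mu = \mu_- \mu_+$ geometrically, as a shift/twist operation on the cyclic configuration, so that in the invariant model $\mu$ acts on the space of invariants by a well-defined linear endomorphism rather than just a rational map. Tracking the $\mu$-orbit of the invariant attached to $(q,q')$, I would show that it is confined to a finite-dimensional subspace $V_{j,q'}$ of the invariant ring, naturally isomorphic to $n$ copies of $\wedge^j \mathbb{C}^{m+1}$; the factor $n$ reflects the fact that the affine cycle has $2n$ positions while $\mu_-\mu_+$ advances the twist by two steps (so the orbit visits $n$ positions up to the linearized action on a single wedge power). Consequently $\dim V_{j,q'} = n\cdot \binom{m+1}{j}$.

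Once this linear model is in place, Cayley--Hamilton applied to the restriction of $\mu$ to $V_{j,q'}$ produces a linear recurrence of order at most $n \cdot \binom{m+1}{j}$ satisfied by $(\mu^k(x))_{k\in\mathbb{Z}}$ for the cluster variable $x$ at $(q,q')$, yielding linearizability of order $n\cdot \binom{m+1}{j}$ as asserted. Theorem \ref{thm:main} is then an immediate consequence.

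The main obstacle is the matching step: building an initial seed in the $SL_{m+1}$-invariant ring whose quiver is precisely $A_m \square A_{2n-1}^{(1)}$ (with correct orientations and extended Dynkin structure), and verifying that the sequence of mutations $\mu_-\mu_+$ really corresponds to a linear action on the claimed subspace $V_{j,q'}$. A secondary difficulty is the exact dimension count for $V_{j,q'}$, which depends on tensor-diagrammatic / web-theoretic decomposition of the relevant invariant space into $SL_{m+1}$-isotypic components; the key technical input is that only the $\wedge^j \mathbb{C}^{m+1}$ component is populated by the initial cluster variable at $(q,q')$, so that the orbit cannot escape this low-dimensional piece.
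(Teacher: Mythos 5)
Your proposal is essentially the paper's own argument: the paper realizes the initial seed in the invariant ring $R_{0,2n,1}$ via narrow triangulations of an annulus with a cut carrying $A\in SL_{m+1}$, shows $(\mu_-\mu_+)^n$ acts as a double Dehn twist inserting $\A^t\otimes\cdots\otimes\A^t$ ($j$ factors, $\A=A^{-2}$) into the $j$-strand connector of the tensor diagram, and uses the antisymmetry of the Levi-Civita tensor to cut the spanning set down to $\binom{m+1}{j}$ monomials, with the factor $n$ coming from the period of the twist --- exactly your $\wedge^j\mathbb{C}^{m+1}$ plus Cayley--Hamilton picture. The only ingredient you do not mention is the algebraic independence of the initial seed (the paper's Theorem \ref{thm:aind}), which is needed to transfer linearizability from this particular realization back to generic initial cluster variables.
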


\begin{example}
 In Example \ref{ex:1} we saw that for $m=3, n=1$ and $j=1$ the sequence is linearizable of order $4 = 1 \cdot {3+1 \choose 1}$. For $j=2$ one can check that $x_2$ and $x_5$ are linearizable of order 
 $6 = 1 \cdot {3+1 \choose 2}$ predicted by the theorem. 
\end{example}

\begin{remark}
 Zamolodchikov integrability for the cases $A_1 \square Y$ for affine extended Dynkin diagrams $Y$ was proven in \cite{ARS2} for types $A$ and $D$, and in \cite{KS} in full generality. Thus, our work can be considered a generalization of their work. 
\end{remark}

\begin{example}
 Assuming $m=1$ in the formula above forces $j=1$. The resulting order $2n$ then agrees with \cite{ARS2, KS}.
\end{example}

\begin{remark}
In the case $n=1$ one obtains the {\it {$Q$-systems}}, Theorem \ref{thm:spd} in this case yields formula $m+1 \choose j$. $Z$-integrability in this situation in the special case $j=1$ was proven by DiFrancesco and Kedem in \cite{DK2}. 
Thus, our work can be considered a generalization of their work. 

The same authors also consider the dynamics of $A_m \square A_{2n-1}^{(1)}$ and $A_{2m-1}^{(1)} \square A_{2n-1}^{(1)}$ quivers in \cite{DK}, obtaining explicit formulas for variables. 
\end{remark}

\begin{remark}
 Zamolodchikov integrability does {\it {not}} work for box products $X \square Y$ of affine extended Dynkin diagrams. Indeed, consider the simplest case of $A_{1}^{(1)} \square A_{1}^{(1)}$. If one starts with all initial variables equal to 
 $1$, one obtains the following sequence of values at one of the vertices: $$x_1, \mu(x_1), \mu^2(x_1), \ldots = 1,2,64, \ldots,$$ with the formula $\mu^k(x_1) = 2^{(k+1)(2k+1)}$ for a general term with $k \geq 1$. This expression grows super exponentially, and thus 
 cannot be a solution to a linear recurrence relation. 
 
 It seems likely that the kind of integrability that works in this case is the Arnold-Liouville integrability, cf. \cite{GSTV, GK}. In particular, quivers $A_{2m-1}^{(1)} \square A_{2n-1}^{(1)}$ fit naturally on a torus, and thus methods of either Gekhtman, Shapiro, Tabachnikov and Vainshtein \cite{GSTV} 
 or of Goncharov and Kenyon \cite{GK} should allow one to create an appropriate Poisson bracket, etc. We do not pursue this direction in this paper. 
\end{remark}

\subsection{Beyond box products}

We are going to state a conjectural criterion for when Zamolodchikov periodicity or integrability phenomena occur for more general quivers. 
Let $Q$ be a quiver such that the underlying graph is bipartite, and such that mutating at its black vertices, followed by mutating at its white vertices returns the same quiver. We are still going to denote the combination of those two operations as $\mu_- \mu_+$, 
and we call such quivers {\it {recurrent}}.

Let us say that labelling $\nu \colon Q \rightarrow \mathbb R_{>0}$ of vertices of $Q$ with positive real numbers is {\it {subadditive}} if the following conditions hold:
\begin{itemize}
 \item for any vertex $z$ we have $$\nu(z) \geq \frac{1}{2} \max \left(\sum_{y \rightarrow z} \nu(y), \sum_{z \rightarrow y} \nu(y) \right),$$ where the sums are taken over all incoming, resp. outgoing arrows of $z$;
 \item if the equality $\nu(z) = \frac{1}{2} \max(\sum_{y \rightarrow z} \nu(y), \sum_{z \rightarrow y} \nu(y))$ holds, it must be the case that $$\sum_{y \rightarrow z} \nu(y) \not =  \sum_{z \rightarrow y} \nu(y).$$
\end{itemize}
Let us say that a labelling is {\it {strictly subadditive}} if the inequality  of the first condition is always strict (and thus the second condition never applies). 
Let us say that a labelling is {\it {weakly subadditive}} if the first condition (with weak inequality) holds, regardless of whether the second condition holds or not. 

\begin{conjecture} \label{conj:lim}
 A recurrent quiver is Zamolodchikov integrable if and only if a subadditive labelling exists. 
\end{conjecture}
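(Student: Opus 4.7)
The plan is to prove the two implications of this equivalence separately, guided by the classical Fomin--Zelevinsky criterion for Zamolodchikov periodicity, which says that a recurrent quiver is periodic iff it admits a strictly positive \emph{additive} labelling (equality everywhere). Here the weaker subadditive condition should play the corresponding role for integrability rather than periodicity.

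For the ``if'' direction, I would use the subadditive labelling $\nu$ to control the growth of cluster variables under iteration of $\mu := \mu_-\mu_+$. The idea is to define a $\nu$-weighted grading on Laurent monomials in the initial cluster variables. The exchange relation $zz' = M_1 + M_2$ implies that the weight of $z'$ equals $\max(\operatorname{wt}(M_1),\operatorname{wt}(M_2)) - \nu(z)$, which by the first subadditive inequality is bounded by $\nu(z)$ itself; hence $\nu$-weights of cluster variables remain uniformly bounded along the orbit. The second subadditive clause would enter to handle the equality case, ensuring that the tropical dynamics is genuinely quasi-periodic rather than drifting in a way that lifts to super-polynomial growth: when the two monomials on the right-hand side have different weights, one strictly dominates and cancellation in the tropical picture actually occurs. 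Once the $\nu$-weights are bounded, the orbit $\{\mu^k(x)\}$ lies in a finite-dimensional subspace (Laurent polynomials of bounded $\nu$-degree in the frozen variables), so a linear recurrence follows by a dimension count.

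For the ``only if'' direction, I would argue contrapositively: if no subadditive labelling exists, then a Perron--Frobenius type analysis of the bipartite adjacency structure of $Q$ should produce a positive eigenvector with eigenvalue $> 1$ for the linearization of the tropical version of $\mu_-\mu_+$ on the cone of positive labellings. Lifting this tropical growth back to cluster dynamics via standard positivity/tropicalization results would yield an initial cluster variable whose Laurent-polynomial degree grows exponentially under $\mu$, contradicting linearizability. The hard part will be this reverse direction: translating the qualitative algebraic hypothesis of linearizability at each initial variable into a quantitative combinatorial growth rate, and then extracting from it a single numerical labelling $\nu$ satisfying \emph{both} clauses of subadditivity at every vertex simultaneously. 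The second clause in particular is a non-vanishing condition rather than an inequality, and extracting it from dynamical data likely requires a delicate case analysis of how tropical orbits can fail to be linearly bounded; an additional subtlety, visible already in Theorem \ref{thm:spd}, is that distinct vertices may satisfy linear recurrences of different orders, so $\nu$ cannot be defined as a single uniform growth exponent.
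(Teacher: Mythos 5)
The statement you are addressing is left as a \emph{conjecture} in the paper (Conjecture~\ref{conj:lim}); the paper proves it only for the quivers $A_m \square A_{2n-1}^{(1)}$ (Theorems~\ref{thm:main} and~\ref{thm:spd}), and does so by a completely different, representation-theoretic route: realizing the initial cluster in the invariant ring $R_{0,2n,1}$, identifying $(\mu_-\mu_+)^n$ with a double Dehn twist that inserts tensor powers $\A^t\otimes\cdots\otimes\A^t$ into a fixed tensor diagram, and extracting the linear recurrence from the Cayley--Hamilton relation for $\A$. So there is no proof in the paper for your proposal to be compared with, and your outline should be judged on its own terms as an attempt at the open statement. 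Also note that the ``classical Fomin--Zelevinsky criterion'' you invoke for periodicity is itself only Conjecture~\ref{conj:sup} here; what is classical is Vinberg's characterization of Dynkin diagrams by additive/subadditive functions, not the dynamical equivalence.

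As a proof, the proposal has genuine gaps in both directions. In the ``if'' direction, the step from ``$\nu$-weights of cluster variables remain uniformly bounded'' to ``the orbit lies in a finite-dimensional subspace, so a linear recurrence follows by a dimension count'' fails twice over. First, a cluster variable is a Laurent polynomial, not a monomial, so its $\nu$-weight must be taken as the maximum over its Newton polytope; bounding that single scalar does not confine the polynomial to a finite-dimensional space, since negative exponents can offset positive ones and the space of Laurent polynomials of bounded $\nu$-degree is infinite-dimensional. Second, even granting finite-dimensionality, linear \emph{dependence} of the iterates $\mu^k(x)$ is weaker than a \emph{shift-invariant} recurrence with coefficients independent of $k$: in the paper's special case the shift-invariance comes precisely from the fact that the orbit is generated by iterating a single linear operator (multiplication by $\A^{\otimes j}$) on a finite-dimensional space, and your scheme supplies no analogue of that operator. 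The ``only if'' direction is not an argument but a wish list; you correctly identify the obstacles (extracting a single labelling satisfying the non-vanishing second clause from dynamical data, and the vertex-dependence of the recurrence order), but the Perron--Frobenius step is asserted, not performed, and the passage from failure of linearizability to exponential tropical growth is exactly the hard content. The proposal is a reasonable heuristic frame, consistent with the intuition $\text{variable}\sim\exp(\text{label}\times\text{time})$ stated in the paper, but it does not constitute a proof of the conjecture.
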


\begin{conjecture} \label{conj:sup}
 A recurrent quiver is Zamolodchikov periodic if and only if a strictly subadditive labelling exists. 
\end{conjecture}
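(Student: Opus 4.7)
\textbf{Proof proposal for Conjecture \ref{conj:sup}.} The plan is to convert the combinatorial condition on $\nu$ into a dynamical statement via tropicalization of the mutation map. Replacing the exchange relation $zz' = \prod_{y \to z} y + \prod_{z \to y} y$ by its tropical analog $z + z' = \max(\sum_{y \to z} y,\ \sum_{z \to y} y)$ yields a piecewise-linear map $T = T_- T_+$ on $\mathbb{R}^{V(Q)}$. Recurrence of $Q$ ensures that $T$ is well-defined as an endomorphism, and a direct calculation shows that a labelling $\nu$ is subadditive precisely when $T(\nu) \le \nu$ coordinatewise, with strict subadditivity corresponding to strict inequality at every vertex.

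For the ``if'' direction (strictly subadditive $\Rightarrow$ periodic), I would use $\nu$ as a tropical Lyapunov function. Since $T(\nu) < \nu$ with a uniform gap, iterated tropical orbits starting from any bounded initial vector remain in a compact region of $\mathbb{R}^{V(Q)}$. Combined with the Laurent phenomenon and positivity, this bounds the Newton polytopes of all iterated cluster variables, so only finitely many monomials, with uniformly bounded coefficients, can appear. Tropical boundedness is then promoted to genuine algebraic periodicity by observing that the orbit of the initial seed under $\mu_- \mu_+$ lives in a finite set of Laurent polynomials and must therefore eventually cycle.

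For the converse (periodic $\Rightarrow$ strictly subadditive), I would build $\nu$ from the periodic dynamics itself. Set all initial cluster variables to a common scalar $t > 0$, iterate one full period of $\mu_- \mu_+$, and define $\nu(z) = \lim_{t \to \infty} (\log t)^{-1} \max_k \log |\mu^k(z)(t)|$. Periodicity ensures this limit is finite at every vertex; the exchange relations translate directly into the subadditivity inequality for $\nu$; and strictness follows because equality would force a subsequence of monomials in the period to grow more slowly than the generic rate, contradicting genericity of the chosen iteration. Failure of strictness while retaining weak subadditivity then corresponds exactly to the integrability regime of Conjecture \ref{conj:lim}, as in the case of affine Dynkin factors in box products.

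The main obstacle, in my view, is the first direction: tropical contraction controls only the exponents of the Laurent polynomials, not their numerical coefficients, so coefficient growth must be ruled out separately. I expect this will require a classification of recurrent quivers admitting a strictly subadditive labelling. In the finite Dynkin setting this reduces to the ADE classification via positive-definite Cartan matrices, with the labelling given by the coordinates of the Perron--Frobenius eigenvector (equivalently, by ranks of fundamental representations). In the full recurrent setting the classification itself is conjectural and appears to tie directly into Keller's periodicity theorem (Theorem \ref{thm:zper}) and its extensions, so a complete proof of Conjecture \ref{conj:sup} likely cannot avoid revisiting those structural results.
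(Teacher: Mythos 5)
The paper does not prove this statement: it is stated as an open conjecture, and the only results established in its vicinity are the unlabeled Proposition at the end of Section~1 (existence of the various labellings for box products, via Vinberg's theory of additive and subadditive functions) and Theorem~\ref{thm:main}, which is a special case of Conjecture~\ref{conj:main}, not of Conjecture~\ref{conj:sup}. So there is no proof in the paper to compare yours against; I can only assess the proposal on its own terms, and as written it is a research program with genuine gaps rather than a proof. Your identification of subadditivity with the tropical inequality $T(\nu)\le\nu$ is correct and matches the author's stated heuristic $\text{variable}\sim\exp(\text{label}\times\text{time})$, but neither direction is carried through.

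Concretely: (1) In the forward direction, $T(\nu)<\nu$ for one vector $\nu$ does not control the orbit of an arbitrary initial vector under $T$, because the tropical exchange $z'=\max\bigl(\sum_{y\to z}y,\ \sum_{z\to y}y\bigr)-z$ is \emph{decreasing} in $z$, so $T$ is not order-preserving and $\nu$ is not automatically a Lyapunov function; ``iterated orbits remain in a compact region'' is an unproved assertion. (2) Even granting bounded Newton polytopes, you correctly flag that coefficient growth is not excluded --- but that is exactly where the entire difficulty of periodicity lives (it is the step that in the known cases requires Keller's categorification or Volkov-style explicit solutions), so the proposal defers the theorem rather than proving it. (3) In the converse direction, your candidate $\nu(z)=\max_k \deg_t \mu^k(z)(t)$ does not visibly satisfy the required inequality: the exchange relation controls degrees at a \emph{specific} time, giving (generically, absent cancellation of top-degree terms) $\deg z+\deg z'=\max\bigl(\sum\deg y\bigr)$, whereas subadditivity requires $2\nu(z)\ge\max\bigl(\sum\nu(y)\bigr)$ with each $\nu(y)\ge\deg y$ taken at its own maximizing time; the substitution pushes the right-hand side up, so the inequality does not follow, and strictness is unaddressed beyond an appeal to genericity. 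Your closing remark --- that a complete argument would require classifying recurrent quivers admitting strictly subadditive labellings and reducing to Theorem~\ref{thm:zper} --- is, I think, the honest summary: that classification \emph{is} the content of the conjecture, and the proposal does not supply it.
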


\begin{conjecture} \label{conj:tol}
 A recurrent quiver is Arnold-Liouville integrable if and only if a weakly subadditive labelling exists. 
\end{conjecture}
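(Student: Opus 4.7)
The plan is to prove both directions separately, with the underlying philosophy that subadditivity of a labelling controls the tropical growth rate of cluster variables under iteration of $\mu_-\mu_+$. Concretely, if $\deg_k(z)$ denotes the degree at vertex $z$ after $k$ steps of $\mu_-\mu_+$ with respect to the initial cluster, strict subadditivity (Conjecture \ref{conj:sup}) corresponds to $\deg_k$ being bounded (periodicity), subadditivity with the asymmetry clause (Conjecture \ref{conj:lim}) corresponds to $\deg_k$ growing linearly in $k$ (linearizability), and weak subadditivity corresponds to $\deg_k$ growing at most quadratically in $k$, which is the expected regime for a completely integrable Hamiltonian system with action-angle coordinates on invariant tori.

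For the \emph{only if} direction, one uses that Arnold-Liouville integrability provides invariant tori foliating an open subset of the cluster variety, forcing $\deg_k(z) = O(k^2)$. Reading off the quadratic leading coefficient $\nu(z) := \limsup_k \deg_k(z)/k^2$ at each vertex produces a candidate labelling. The tropicalized exchange relation $\deg(z) + \deg(z') = \max(\sum_{y\to z}\deg(y), \sum_{z\to y}\deg(y))$ applied along the intermediate mutations of $\mu_-\mu_+$, together with recurrence of the quiver, then yields $\nu(z) \geq \tfrac{1}{2} \max(\sum_{y\to z}\nu(y), \sum_{z\to y}\nu(y))$ in the limit $k\to\infty$, which is exactly weak subadditivity.

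For the \emph{if} direction, one first classifies the recurrent bipartite quivers admitting a weakly but not strictly subadditive labelling. These should comprise box products $X \square Y$ of affine extended Dynkin diagrams such as $A_{2m-1}^{(1)} \square A_{2n-1}^{(1)}$, together with consistent bipartite dimer models on the torus. For these toric dimer cases the Goncharov-Kenyon construction \cite{GK} produces a canonical log-canonical Poisson bracket and enough commuting Hamiltonians, given by traces of loop holonomies, to realize the Arnold-Liouville structure; the Gekhtman-Shapiro-Tabachnikov-Vainshtein framework \cite{GSTV} provides an alternative route in overlapping cases.

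The main obstacle will be the classification step in the \emph{if} direction: one must enumerate all recurrent bipartite quivers with weakly but not strictly subadditive labellings and match each to a known integrable hierarchy of dimer or pentagram type. A close secondary obstacle is making the polynomial-growth extraction of the \emph{only if} direction rigorous, since cluster variables are very large Laurent polynomials in the initial cluster, and proving that the quadratic leading coefficient $\nu(z)$ exists and is strictly positive at every vertex will likely require an analogue of the F-polynomial tropicalization adapted to recurrent bipartite dynamics.
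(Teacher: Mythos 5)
This statement is a \emph{conjecture} in the paper, not a theorem: the author explicitly declines to pursue the Arnold--Liouville direction (``We do not pursue this direction in this paper''), so there is no proof of record to compare against. Your proposal does not close that gap either; it is a research program whose two main steps are each essentially as hard as the conjecture itself, and you say as much in your final paragraph.

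Concretely, in the \emph{only if} direction, Arnold--Liouville integrability is a statement about a Poisson bracket and a complete family of commuting Hamiltonians on the cluster variety; it does not by itself control the degrees of the Laurent polynomials $(\mu_-\mu_+)^k(z)$, so the claim $\deg_k(z)=O(k^2)$ is unsupported. Even granting it, the extraction $\nu(z)=\limsup_k \deg_k(z)/k^2$ fails exactly where it must not: a strictly subadditive labelling is in particular weakly subadditive, so Zamolodchikov periodic quivers fall under this conjecture, and for those $\deg_k$ is bounded and your $\nu$ is identically $0$, which is not a valid labelling since labels must lie in $\mathbb{R}_{>0}$. Moreover the tropicalized exchange relation is only an inequality in general (leading terms can cancel), and dividing $\deg_k(z)+\deg_{k+1}(z)=\max(\cdots)$ by $k^2$ and passing to the limit yields an \emph{equality} $2\nu(z)=\max(\cdots)$ at every vertex, i.e.\ an additive function, not the weak subadditivity inequality you claim to derive. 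In the \emph{if} direction, the step ``first classify the recurrent bipartite quivers admitting a weakly but not strictly subadditive labelling'' is itself an open classification problem at the level of this paper (the paper only invokes Vinberg's additive and subadditive functions to handle box products of Dynkin diagrams), and the subsequent matching of every such quiver to a Goncharov--Kenyon dimer model or a GSTV network, with a verified log-canonical bracket and enough independent commuting integrals, is precisely the content of the conjecture. Identifying these obstacles is a reasonable reading of the paper's heuristic $\text{variable}\sim\exp(\text{label}\times\text{time})$, but as written the proposal proves neither direction.
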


 \begin{figure}[ht]
    \begin{center}
\vspace{-.1in}
\epsfig{file=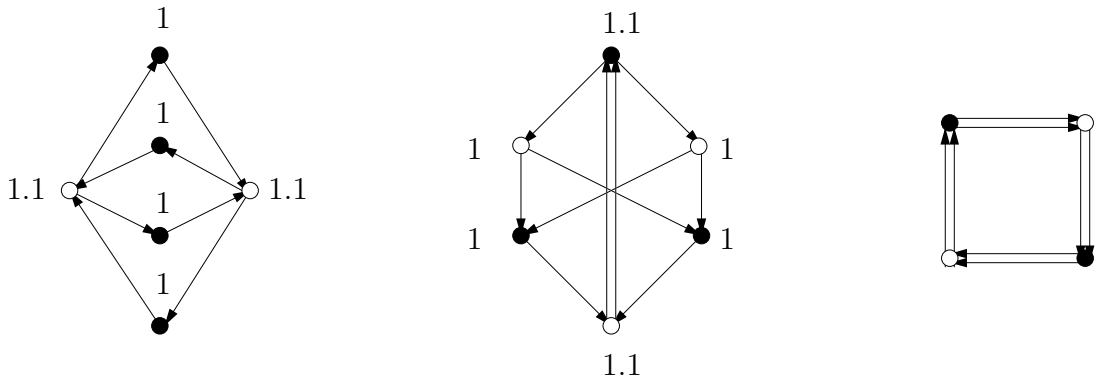, scale=1}
\vspace{-.1in}
    \end{center} 
    \caption{Some recurrent quivers.}
    \label{fig:zint4}
\end{figure}

\begin{example}
 The first quiver in Figure~\ref{fig:zint4} has a strictly subadditive labelling as shown. One can check that it is Zamolodchikov periodic, with $(\mu_- \mu_+)^4 = id$.
\end{example}

\begin{example}
 The second quiver in Figure~\ref{fig:zint4} has a subadditive labelling as shown. It does not have a strictly subadditive labelling however since the double edge forces its endpoints to have equal labellings. 
 One can check that it is Zamolodchikov integrable: four of its vertices are linearizable of order $8$, while the other two are linearizable of order $5$.
\end{example}

\begin{example}
 The third quiver $A_1^{(1)} \square A_1^{(1)}$ in Figure~\ref{fig:zint4} does not have a subadditive labelling. Indeed, the first condition would force all four vertices to have the same label, which would then violate the second condition. It has a  weakly 
 subadditive labelling however. This quiver occurs as a special case of construction in \cite{GSTV}, with all the associated Arnold-Liouville integrability implications.
\end{example}

In fact, the following proposition is not hard to verify, using Vinberg's \cite{Vi} {\it {additive}} and {\it {subadditive}} functions on Dynkin diagrams, see also \cite[Chapter 7.3]{ARS}.

\begin{proposition} The quiver $X \square Y$ has 
 \begin{itemize}
  \item a strictly subadditive labelling for $X$ and $Y$ finite type Dynkin diagrams;
  \item a subadditive labelling for $X$ a Dynkin diagram of finite type and $Y$ an extended Dynkin diagram of affine type;
  \item a weakly subadditive labelling exists if both $X$ and $Y$ are affine type extended Dynkin diagrams.
 \end{itemize}
\end{proposition}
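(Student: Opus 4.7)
The plan is to construct each labelling on $X \square Y$ as a ``tensor product'' of Vinberg's standard labellings on the two factor Dynkin diagrams. Recall from \cite{Vi} that for any finite type Dynkin diagram $D$ there is a strictly subadditive labelling $\nu_D \colon D \to \mathbb{R}_{>0}$ with $2\nu_D(v) > \sum_{w \sim v}\nu_D(w)$ for every vertex $v$, while for an extended affine Dynkin diagram $D$ the analogous labelling is \emph{additive}, i.e.\ satisfies $2\nu_D(v) = \sum_{w \sim v}\nu_D(w)$. So the first step is to set
$$\nu(q,q') := \nu_X(q)\,\nu_Y(q')$$
on vertices of $X \square Y$, using on each factor the labelling dictated by its type.

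Next I would analyze the neighbor structure at a vertex $(q,q')$. Unwinding the direction rules in the definition of $\square$, the set of incoming arrows and the set of outgoing arrows at $(q,q')$ equal, in some order (determined solely by the bipartite color of $(q,q')$), the sets $\{(q,q'') : q'' \sim q' \text{ in } Y\}$ and $\{(q''',q') : q''' \sim q \text{ in } X\}$. Consequently the two sums that appear in the subadditivity condition are
$$S_Y \;=\; \nu_X(q)\sum_{q'' \sim q'}\nu_Y(q''), \qquad S_X \;=\; \nu_Y(q')\sum_{q''' \sim q}\nu_X(q'''),$$
and dividing the inequality $\nu(q,q') \geq \tfrac12 \max(S_X,S_Y)$ through by $\nu_X(q)\nu_Y(q')$ collapses it to the simultaneous pair of Vinberg-subadditivity conditions for $\nu_X$ at $q$ and $\nu_Y$ at $q'$ independently. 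This factorization is the only real point requiring care.

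The three bullets now follow by a routine case analysis. If $X$ and $Y$ are both finite type, both Vinberg inequalities are strict, so $\nu$ is strictly subadditive on $X \square Y$. If $X$ is finite type and $Y$ is extended affine, then $\nu_Y$ is additive while $\nu_X$ is strictly subadditive, so $S_Y = 2\nu(q,q')$ (forcing equality in $\nu(q,q') \geq \tfrac12\max(S_X,S_Y)$) whereas $S_X < 2\nu(q,q')$; thus $S_X \neq S_Y$, which is precisely the second condition required for non-strict subadditivity, so $\nu$ is subadditive. Finally, if both $X$ and $Y$ are extended affine, both $S_X$ and $S_Y$ equal $2\nu(q,q')$, establishing weak subadditivity with equality; but now $S_X = S_Y$, so the second condition fails and $\nu$ is only weakly subadditive. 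There is no serious obstacle beyond this bookkeeping, provided one invokes Vinberg's classification as a black box.
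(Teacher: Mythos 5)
Your proof is correct and follows exactly the route the paper intends (the paper gives no written proof, only the pointer to Vinberg's additive and subadditive functions, which is precisely your product construction $\nu(q,q')=\nu_X(q)\,\nu_Y(q')$). The one point worth making explicit, which you do handle, is that at every vertex of $X\square Y$ the incoming and outgoing neighbor sets are exactly the $X$-neighbors and the $Y$-neighbors in some order, so the $\max$ condition factors into the two Vinberg conditions and the tie-breaking clause reduces to comparing $S_X$ with $S_Y$.
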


\begin{remark}
 Reutenauer in \cite{R} uses additive and subadditive labellings for a similar purpose as we do here: to classify when growth of variables is exponential, and to deduce linearizability whenever this happens. He obtains a beautiful (essentially, Cartan-Killing) classification of the 
 cases when this happens. 
 
 The quiver dynamics he deals with however is such that the vertices at which one mutates are required to be either a source or a sink. This is closely related to the notion of bipartite quiver in the sense of Fomin and Zelevinsky \cite{FZ4}. 
 Thus, Reutenauer's classification does not address the dynamics we consider in this paper. 
\end{remark}

\begin{remark}
 The intuition behind the labelling conjectures is  $$\text{variable  } \sim \exp(\text{label } \times \text{ time}).$$ One may expect the same heuristic to work not only for the bipartite dynamics of repeating $\mu_- \mu_+$, but also for other sequences of mutations that at the end return the original quiver. 
 A special case of such mutations and their integrability properties were studied by Fordy, Marsh and Hone in \cite{FM, FH}. It would be interesting to see if some form of labelling criterion agrees with their classification of integrable cases. 
\end{remark}

\smallskip
\centerline{------------}
\smallskip

The author is grateful to Sergey Fomin,  Bernhard Keller, Vic Reiner, Christophe Reutenauer, Gwendolen Powell, Michael Gekhtman, and Pavel Galashin for their comments on the draft of the paper. The author would also like to express gratitude to the anonymous referees for their careful 
reading of the draft of the paper and many useful comments.

\section{Invariants and tensors} 

\subsection{Rings of $SL_{m+1}$ invariants}

Let $V\cong \mathbb C^{m+1}$ be a vector space endowed with a volume form.
The special linear group $\SL(V)$ acts 
on both $V$ and the dual space $V^*$, acting on the latter via 
$$(gu^*)(v)=u^*(g^{-1}(v)),$$
for $v\in V$, $u^*\in V^*$, and $g\in\SL(V)$. 
The group $\SL(V)$ also acts on itself, 
via conjugation. 
Following \cite{FP2}, we define the rings
\[
R_{a,b,c}(V)=\CC[(V^*)^a\times V^b \times (\SL(V))^c]^{\SL(V)}
\]
of $\SL(V)$-invariant polynomials on $(V^*)^a\times V^b \times (\SL(V))^c$. 
The closely related $\SL(V)$ action on the ring
$\CC[(V^*)^a\times V^b \times\operatorname{End}(V)]$ was studied by
  Procesi~\cite{procesi}. 

\begin{theorem}[{\rm cf.\ \cite[Theorem 12.1]{procesi}}]
\label{thm:procesi}
 The ring of invariants $R_{a,b,c}(V)$ is generated by: 
\begin{itemize}
 \item the traces $tr(X_{i_1} \dotsc X_{i_r})$ of arbitrary
 (non-commutative) monomials in the $c$ matrices in $\SL(V)$;
 \item the pairings $\langle v_i, M w_j \rangle$, where $v_i$ is a
   vector, $w_j$ is a covector and $M$ is any monomial as before;
 \item volume forms $\langle M_1 v_{i_1}, \ldots, M_n v_{i_n}\rangle$,
 where $M_i$-s are monomials as before and $v_i$-s are vectors; 
 \item volume forms $\langle M_1 w_{i_1}, \ldots, M_n w_{i_n}\rangle$,
 where $M_i$-s are monomials as before and $w_i$-s are covectors. 
\end{itemize}
\end{theorem}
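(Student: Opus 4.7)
The plan is to derive this as a corollary of Procesi's classical first fundamental theorem for $\operatorname{End}(V)$-invariants under simultaneous conjugation, by restricting from the endomorphism setting to the $\SL(V)$ setting via the Reynolds operator. Concretely, consider the closed inclusion
\[
\iota\colon (V^*)^a\times V^b \times (\SL(V))^c \hookrightarrow (V^*)^a\times V^b \times (\operatorname{End}(V))^c,
\]
which is $\SL(V)$-equivariant (where $\SL(V)$ acts on $\operatorname{End}(V)$ by conjugation). The cited Theorem 12.1 of Procesi describes the $\SL(V)$-invariants on the right-hand side in terms of traces, pairings, and volume forms exactly of the four types listed. Pulling back along $\iota$ gives a map of invariant rings $\iota^\ast\colon R^{\operatorname{End}}_{a,b,c}(V)\to R_{a,b,c}(V)$, and the generators listed in the statement are precisely the images of Procesi's generators under $\iota^\ast$.

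The first step is to verify this restriction map is surjective. Since $\SL(V)$ is reductive in characteristic zero and the embedding $\iota$ is the inclusion of an $\SL(V)$-stable closed subvariety, the restriction map on coordinate rings is surjective; composing with the Reynolds operator (which commutes with restriction to invariant-stable subvarieties) shows surjectivity on invariants. Thus any $\SL(V)$-invariant polynomial on $(V^*)^a\times V^b\times \SL(V)^c$ lifts to an $\SL(V)$-invariant polynomial on $(V^*)^a\times V^b\times \operatorname{End}(V)^c$, and the lift can be written as a polynomial in Procesi's generators.

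The second step is a small bookkeeping cleanup to land in the list as stated. Procesi's generating set for the endomorphism version also contains determinantal invariants that pair volume forms with matrix monomials in various positions; after restriction, any such expression either matches one of the four types or becomes expressible in them using the identity $\det(X_i)=1$ for $X_i\in\SL(V)$ (allowing one to trade a determinant factor for trivial contributions) together with standard identities (trace of identity, cyclicity of trace, multilinearity of the volume form). Any monomial built from matrices and a determinantal factor $\det(X_i)$ collapses on $\SL(V)$, so no new generators appear.

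The main obstacle is the last bookkeeping step: matching Procesi's generating invariants, which were derived in the setting of $\operatorname{End}(V)$ with a less symmetric treatment of vectors versus covectors (he considers $V^*\times V\times \operatorname{End}(V)$ while we also include multiple copies and covectors acted on via the inverse transpose), to the symmetric four-part list above. The needed identities are standard consequences of polarization and the combinatorics of antisymmetric tensors in $\bigwedge^{m+1} V$, so no new ideas are required, but one must check that no further volume-form-type generator (for instance, mixing vectors and covectors in a single determinant) is missed, using the isomorphism $\bigwedge^k V^\ast \cong \bigwedge^{m+1-k} V$ coming from the volume form on $V$.
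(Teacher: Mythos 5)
The paper does not actually prove this statement: it is quoted with a ``cf.'' as an adaptation of Procesi's Theorem 12.1, which concerns the $\SL(V)$-invariants of $(V^*)^a\times V^b\times\operatorname{End}(V)^c$. Your proposal supplies precisely the standard bridge that the citation leaves implicit --- linear reductivity of $\SL(V)$ in characteristic zero makes restriction of invariants surjective along the $\SL(V)$-stable closed subvariety $\{\det X_i=1\}$ of $\operatorname{End}(V)^c$, and the restrictions of Procesi's generators reduce (via $\det X_i=1$, Cayley--Hamilton/Newton identities, and the classical FFT bookkeeping for vectors versus covectors) to the four listed families --- so it is correct and consistent with what the paper intends.
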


Of crucial importance in what follows will be the rings $R_{0,2n,1}(V)$.

\subsection{Tensor diagrams on surfaces}  \label{sec:ann}

We refer the reader to \cite{FP2} for more details of the following construction.

Let $S$ be a connected oriented surface with nonempty
boundary~$\partial S$ and 
finitely many marked points on~$\partial S$,
each of them colored black or white. 

Let us draw several simple non-intersecting
curves on~$S$ called {\it {cuts}} such that:
\begin{itemize}
 \item $S$ minus the cuts is homeomorphic to a disk;
 \item each cut connects unmarked boundary points; 
 \item for each cut, a choice of direction is made;
 \item each cut is defined up to isotopy that fixes its endpoints.
\end{itemize}

Let $a$ be the number of white boundary vertices, $b$ be the number of black boundary vertices, $c$ be the number cuts. 
We associate a covector in $(\mathbb C^{m+1})^*$ to each white point, a vector in $\mathbb C^{m+1}$ to each black point, and 
an element of $SL_{m+1}$ to each cut. 

A \emph{tensor diagram}
is a finite bipartite graph $D$ embedded in~$S$, with a fixed 
proper coloring of its vertices into two colors, black and white, 
such that each internal vertex is $(m+1)$-valent,
and each boundary vertex is a marked point of~$S$.
The embedded edges of~$D$ are allowed to cross each other.

We denote by $\operatorname{bd}(D)$ 
(resp. $\operatorname{int}(D)$) the set 
of \emph{boundary} (resp. \emph{internal}) vertices of~$D$. 

A tensor diagram $D$ on $S$
defines an $\SL(V)$ invariant
$[D]\in R_{a,b,c}$ via the following formula.
Let $\operatorname{cut}(D)$ denote the set
of points where $D$ crosses the cuts. 
\emph{Edge fragments} are the pieces into which those cuts cut the
edges of~$D$. 
(If an edge is not cut, it forms an edge fragment by itself.) 
Then the invariant $[D]$ is given by
\begin{equation*}
\begin{split}
[D]=&\sum_\ell 
\biggl(\,\prod_{v\in\operatorname{int}(D)}\operatorname{sign}(\ell(v))\biggr)
\biggl(\,\prod_{\substack{v\in\operatorname{bd}(D)\\
\text{$v$ black}}}x(v)^{\ell(v)}\biggr)\\ &\quad
\biggl(\,\prod_{\substack{v\in\operatorname{bd}(D)\\
\text{$v$ white}}}y(v)^{\ell(v)}\biggr)
\biggl(\,\prod_{\substack{v\in \operatorname{cut}(D)\\
}}X_{\ell(v)}\biggr)
\end{split}
\end{equation*}
where 
\begin{itemize}
\vspace{-.05in}
\item
$\ell$ runs over all labellings of the edge fragments in~$D$ by the
numbers~$1,\ldots, m+1$ such that for each internal vertex~$v$ of~$D$,
the labels of the edges incident to $v$ are 
distinct; 
\item
$\operatorname{sign}(\ell(v))$ is the sign of the 
permutation defined by the clockwise
reading of those $m+1$ labels; 
\item
$x(v)^{\ell(v)}$ denotes the monomial $\prod_e x_{\ell(e)}(v)$,
product over all edges $e$ incident to~$v$, and similarly for $y(v)^{\ell(v)}$;
\item 
$X_{\ell(v)}$ is the entry $X_{ij}$ of the matrix 
$X \in \SL(V)$ associated with the crossing of 
the cut at a vertex~$v$; 
here $i$ and $j$ are the labels of the edge fragments adjacent to~$v$.
(Depending on the directions at the crossing, we may need to
invert~$X$.) 
\end{itemize}

 \begin{figure}[ht]
    \begin{center}
\vspace{-.1in}
\epsfig{file=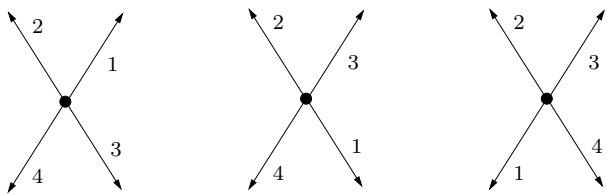, scale=1}
\vspace{-.1in}
    \end{center} 
    \caption{Three edge fragment labellings around a vertex.}
    \label{fig:zint5}
\end{figure}

Note that for even $m$ the sign $\operatorname{sign}(\ell(v))$ is well defined as it is the same no matter where we start reading our permutation of edge fragment labels in cyclic order. For odd $m$ this is not the case however. We deal with it by assigning positive sign 
to a fixed base choice of edge fragment labelling. Then any other labelling has a well-defined sign at each vertex which is the product of the base choice of sign and the actual sign in any cyclic reading. For example, if $m=3$ and edges around an internal vertex are labelled with 
$1,2,3,4$ in the base labelling as shown in Figure~\ref{fig:zint5} on the left, then the labelling in the middle gets negative sign, while the labelling on the right gets positive sign. 

As a result, we only define $[D]$ up to a sign, unless we also specify the base labelling. In the actual cases we will deal with there will be a natural choice of base labelling, which will be indicated. 

\begin{example}
 The following figure shows an invariant $[D]$ in $R_{0,4,1}$ represented as a tensor diagram on an annulus. Two of the four vectors are placed on one boundary component and two on the other. 
 The cut represents an element $A \in SL_4$. 
  \begin{figure}[ht]
    \begin{center}
\vspace{-.1in}
\epsfig{file=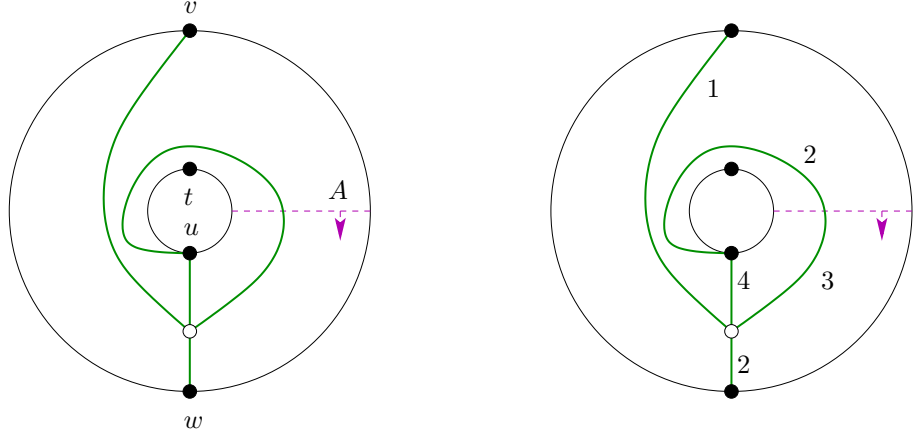, scale=1}
\vspace{-.1in}
    \end{center} 
    \caption{A tensor diagram representing an invariant in $R_{0,4,1}$ for $m=3$, and one choice of labelling of the resulting five edge fragments.}
    \label{fig:zint6}
\end{figure}
The right side shows one possible labelling of edge fragments, resulting in contribution to $[D]$ equal to 
$v_1 u_2 u_4 w_2 a_{2,3}.$ The sign of the contribution is not determined since we did not specify the base labelling. Let us choose this particular labelling as the base one. Summing over all contributions one gets 
$$[D] = 
- \det
\left(
\begin{matrix}
v_1 &  w_1  & u_1 & (Au)_1\\
v_2 &  w_2  & u_2 & (Au)_2\\
v_3 &  w_3  & u_3 & (Au)_3\\
v_4 &  w_4  & u_4 & (Au)_4
\end{matrix} \right).
$$
\end{example}

\subsection{Normalization and skein relations}

We shall also consider {\it {normalized}} tensors associated with tensor diagrams as follows. Let 
$$[[D]] = [D] \prod \frac{1}{k!},$$
where the product is taken over all homotopy equivalence classes of edges connecting pairs of internal vertices in $D$,
and $k$ is the number of edges in such an equivalence class.
For example, for $D$ in Figure~\ref{fig:zint6} we have $[[D]]=[D]$, since even though there is a pair of vertices connected by two edges, those two vertices are not both internal, and even if they were - those two edges are not homotopy equivalent. 
  \begin{figure}[ht]
    \begin{center}
\vspace{-.1in}
\epsfig{file=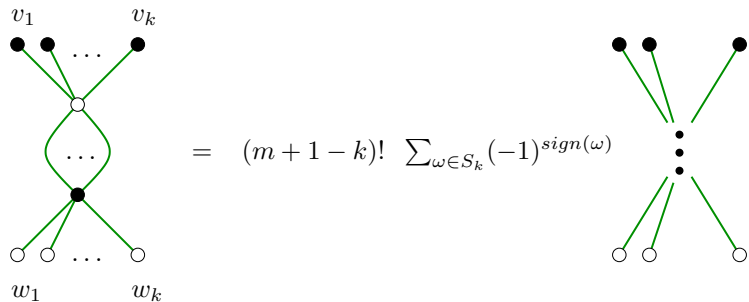, scale=1}
\vspace{-.1in}
    \end{center} 
    \caption{A skein relation that can be applied locally.}
    \label{fig:zint7}
\end{figure}
On the other hand, for the tensor diagram $D$ on the left in Figure~\ref{fig:zint7} we have $$[[D]] = \frac{1}{(m+1-k)!} [D].$$
The right hand side of Figure~\ref{fig:zint7} shows how to express $[D]$ as an alternating sum over all possible ways to match the $k$ vertices on top with the $k$ vertices on the bottom. 
Note that this relation can be applied locally, i.e. vertices $v_i$ and $w_j$ may be internal as well as boundary.

\begin{remark}
An important special case of the relation is as follows: if two of (say) $v_i$-s coincide and are a boundary vertex, the resulting tensor vanishes, as evident from alternating nature of the skein relation. 
\end{remark}

\begin{remark}
 For odd $m$ one needs base labellings of the tensor diagrams to agree with each other in order for the signs on the right hand side of the relation to be as shown. In absence of specified base labellings, the skein relation can be considered to 
 hold up to a correct sign choice for each term. 
\end{remark}

For future use, let $$[[u_1, \ldots, u_{i}],u^*,[u_{i+1}, \ldots, u_{m+2}]] = [[D]],$$ where $D$ is the tensor diagram in Figure~\ref{fig:zint16}.   

  \begin{figure}[ht]
    \begin{center}
\vspace{-.1in}
\input{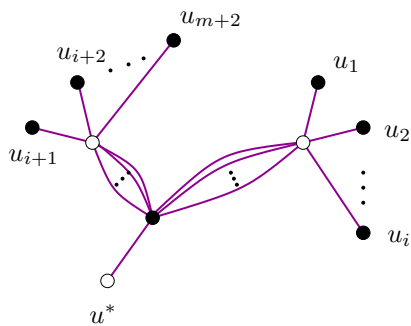} 
\vspace{-.1in}
    \end{center} 
    \caption{An example of a tensor diagram.}
    \label{fig:zint16}
\end{figure}

\section{Proof of the main theorem}

\subsection{The initial cluster of type $A_m \square A_{2n-1}^{(1)}$}
Consider an annulus with $n$ black marked points placed on each of the two boundary components, $2n$ points total. 

Each marked point has a vector in $\mathbb C^{m+1}$ associated with it, denoted $v_1$ through $v_{n}$ on one boundary component, $w_1$ through $w_{n}$ on the other. The direction of numbering is counterclockwise on both components. 
In addition, we consider a cut associated with an element $A \in SL_{m+1}$ between the two 
boundary components. We assume that the cut separates $v_n$ with $v_1$ and $w_n$ with $w_1$. 
We set $v_i = A v_{i-n}$ and $w_i = A w_{i-n}$, thus extending indexing set of $v$-s and $w$-s to $\mathbb Z$.

  \begin{figure}[ht]
    \begin{center}
\vspace{-.1in}
\input{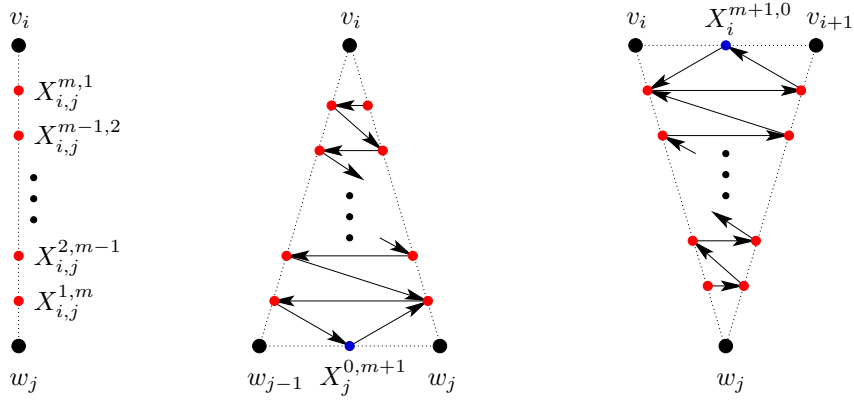} 
\vspace{-.1in}
    \end{center} 
    \caption{Variables planted on segments of a triangulation; two types of narrow triangles and quivers built inside them.}
    \label{fig:zint9}
\end{figure}
We consider triangulations of the annulus by $2n$ segments of the form $v_i w_j$ into 
{\it {narrow triangles}}, i.e. triangles where two vertices on the same boundary component have adjacent indeces modulo $n$. To each such {\it {narrow triangulation}} $T$ we can associate a seed $\mathfrak T$ as follows. 
\begin{itemize}
 \item For each $i$ plant a frozen variable $$X_i^{m+1,0} = \langle v_i, v_{i+1}, \ldots, v_{i+m} \rangle$$ on the arc connecting $v_i$ and $v_{i+1}$.
 \item For each $i$ plant a frozen variable $$X_i^{0, m+1} = \langle w_i, w_{i-1}, \ldots, w_{i-m} \rangle$$ on the arc connecting $w_{i-1}$ and $w_{i}$.
 \item For each segment $v_i w_j$ of the triangulation plant variables $$X_{i,j}^{\alpha, \beta} = \langle v_i, v_{i+1}, \ldots, v_{i+\alpha-1}, w_j, w_{j-1}, \ldots, w_{j-\beta+1} \rangle $$ on this segment, as shown in Figure~\ref{fig:zint9}. Here we always have 
 $\alpha + \beta = m+1$.
\end{itemize}
In addition, in each narrow triangle create a quiver connecting planted functions as shown in Figure~\ref{fig:zint9}.  This creates a quiver on all of the variables, which is the final part of the seed $\mathfrak T$.

 The matrix $A$ is implicitly present in the definition since we use $v_i = A v_{i-n}$ and $w_i = A w_{i-n}$ throughout. In particular, if the diagonal $v_i w_j$ crosses the $A$ cut, we require the indexing to satisfy $$\left[\frac{i}{n}\right] - \left[\frac{j}{n}\right] = \text{number of 
 times the diagonal crosses the $A$-cut in positive direction},$$ where the number on the right is computed as we walk from $v$ end to $w$ end of the diagonal. The number can be negative if the crossing is in the negative direction. 
 
   \begin{figure}[ht]
    \begin{center}
\vspace{-.1in}
\epsfig{file=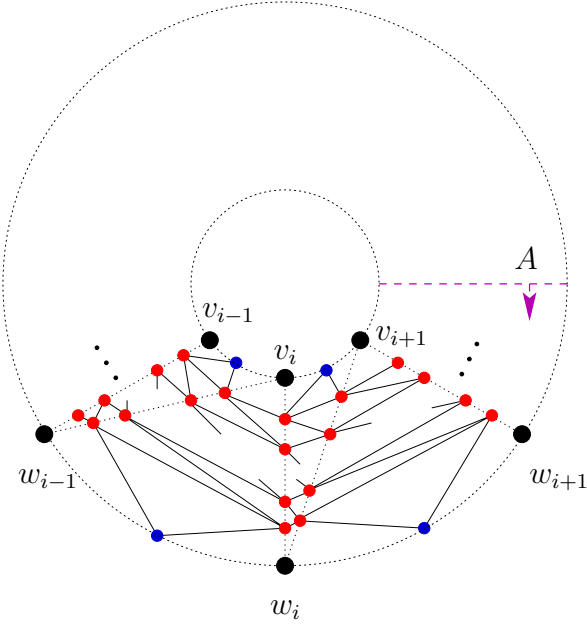, scale=1}
\vspace{-.1in}
    \end{center} 
    \caption{The initial cluster of type $A_m \square A_{2n-1}^{(1)}$.}
    \label{fig:zint8}
\end{figure}
Now we can create the initial seed $\mathfrak T^*$ by taking the triangulation by all segments of the form $v_i w_i$ and $v_i w_{i-1}$. The result is shown in Figure~\ref{fig:zint8}.
\begin{lemma}
 The resulting quiver of $\mathfrak T^*$, ignoring the frozen variables, is of type $A_m \square A_{2n-1}^{(1)}$.
\end{lemma}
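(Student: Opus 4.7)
The plan is direct combinatorial verification, matching vertex sets first and then arrows. First, I would count the mutable vertices. On each of the $2n$ segments of the triangulation $\mathfrak{T}^*$ sit exactly $m$ mutable variables $X_{i,j}^{\alpha,\beta}$ with $\alpha+\beta = m+1$ and $\alpha,\beta\ge 1$, giving $2nm$ mutable vertices, matching $m \cdot 2n = |A_m|\cdot|A_{2n-1}^{(1)}|$. I would label segments cyclically around the annulus by $s \in \mathbb{Z}/2n\mathbb{Z}$ (so $s$ alternates between segments of the form $v_iw_i$ and $v_iw_{i-1}$), and label each variable on segment $s$ by $k = \alpha \in \{1,\ldots,m\}$. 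This produces a bijection between mutable vertices of $\mathfrak{T}^*$ and the vertex set $[m]\times \mathbb{Z}/2n\mathbb{Z}$ of $A_m \square A_{2n-1}^{(1)}$.

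Second, I would verify that the edges match. The arrows inside each narrow triangle are prescribed by Figure~\ref{fig:zint9}. Arrows that cross between the two segments of a single triangle, between variables of the same index $k$, are precisely the ``horizontal'' edges of the box product (those in the $A_{2n-1}^{(1)}$ direction). The arrows that change $k$ within one triangle are partially cancelled by opposite arrows coming from the neighbouring triangle sharing that segment; the surviving arrows give the ``vertical'' $A_m$-direction edges along the segment. Equivalently, the $A_m$-edges inside each segment arise by gluing the two adjacent triangles and reducing the resulting $2$-cycles. I would also confirm that the edges incident to the frozen arcs $X_i^{m+1,0}$ and $X_i^{0,m+1}$ only attach to the frozen vertices and hence do not affect the mutable part of the quiver.

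Third, I would check the bipartite $2$-colouring and the global topology. The vertex $(k,s)$ should be coloured by the parity of $k+s$, which I would verify agrees with the $2$-colouring induced by the arrow orientations of Figure~\ref{fig:zint9}; this matches the bipartite structure required for the box product and for $\mu_-\mu_+$ to make sense. The identifications $v_i = Av_{i-n}$, $w_i = Aw_{i-n}$, together with the cyclic nature of the triangulation, wrap the $s$-direction into a cycle of length $2n$, yielding the affine cycle $A_{2n-1}^{(1)}$ rather than $A_{2n-1}$.

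The main obstacle is the careful bookkeeping in the second step: tracking arrow orientations inside each triangle from Figure~\ref{fig:zint9}, identifying the $2$-cycles that have to be cancelled when two triangles are glued along a common segment, and verifying that what survives is exactly the edge set of $A_m \square A_{2n-1}^{(1)}$. Once this is carried out for a single pair of neighbouring triangles, the cyclic symmetry of $\mathfrak{T}^*$ under the rotation by the $A$-cut propagates the result to the whole annulus.
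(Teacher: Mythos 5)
Your proposal is correct and takes essentially the same approach as the paper, which offers no argument beyond the sentence ``The proof of the lemma is clear from the construction'': your vertex count ($m$ mutable variables on each of the $2n$ diagonals), the edge-matching between within-triangle arrows and the two directions of the box product, and the observation that the identifications $v_i = Av_{i-n}$, $w_i = Aw_{i-n}$ close the $s$-direction into the affine cycle are exactly the details the paper elides. The only step requiring real care is the orientation bookkeeping against Figure~\ref{fig:zint9} when two narrow triangles are glued along a diagonal, which you correctly isolate as the main work.
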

The proof of the lemma is clear from the construction. Note that there is more than one way to express the same variable. Specifically, $$X_{i,j}^{\alpha, \beta} = X_{i+n,j+n}^{\alpha, \beta}, \text{ since }  \langle A u_1, \ldots, A u_{m+1} \rangle = \langle u_1, \ldots, u_{m+1} \rangle.$$
\begin{example}
 For $n=m=2$ we obtain the following quiver. 
   \begin{figure}[ht]
    \begin{center}
\vspace{-.1in}
\epsfig{file=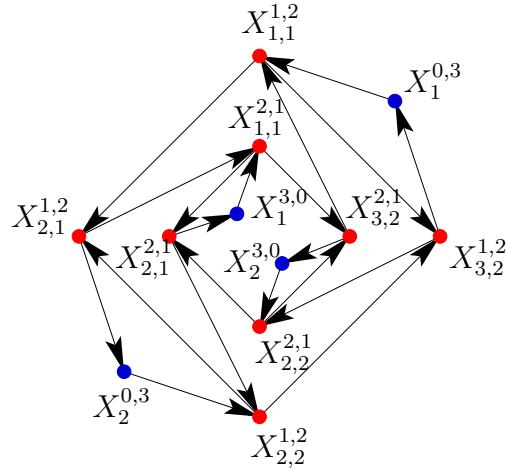, scale=1}
\vspace{-.1in}
    \end{center} 
    \caption{An example of initial quiver for $n=m=2$.}
    \label{fig:zint10}
\end{figure}
For example, the diagonal connecting $v_1$ to $w_2$ crosses the cut once in positive direction. Thats why we choose to index it $v_3 w_2$, so that we have $[\frac{3}{2}] - [\frac{2}{2}] = 1$. Therefore, the two variables planted on this diagonal 
are $X_{3,2}^{2,1} = \langle A v_1, A v_2, w_2 \rangle$ and $X_{3,2}^{1,2} = \langle A v_1, w_2, w_1 \rangle$. 
\end{example}

Thus, we have realized the desired initial cluster inside the ring $R_{0,2n,1}$. In order to use this realization, the following property of $\mathfrak T^*$ is needed. Its proof is postponed until Section \ref{sec:aind}.

\begin{theorem} \label{thm:aind}
 Variables of the seed $\mathfrak T^*$ are algebraically independent. 
\end{theorem}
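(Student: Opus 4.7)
The plan is to prove algebraic independence of the $2n(m+1)$ variables in $\mathfrak T^*$ inside $\operatorname{Frac}(R_{0,2n,1}(V))$ by a dimension-matching argument together with an explicit birational inversion.

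\textbf{Step 1 (Dimension count).} The GIT quotient $\mathcal X := (V^{2n}\times \SL(V))/\!/\SL(V)$ is irreducible of dimension
\[
2n(m+1)+((m+1)^2-1)-((m+1)^2-1)=2n(m+1),
\]
which matches exactly the number of variables in $\mathfrak T^*$: namely the $2n$ frozen boundary variables $X_i^{m+1,0}$ and $X_i^{0,m+1}$, plus the $m$ mutable variables $X_{i,j}^{\alpha,\beta}$ on each of the $2n$ diagonals $v_iw_i$ and $v_iw_{i-1}$ of the triangulation, for a total of $2n+2nm=2n(m+1)$.

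\textbf{Step 2 (Birational inversion).} The plan is to show that the rational map $\Phi\colon \mathcal X\dashrightarrow \mathbb A^{2n(m+1)}$ defined by these variables is dominant by constructing an inverse on a Zariski open set. Given generic values of the variables, I would first use the $\SL(V)$-gauge freedom to normalize $v_1,\dots,v_{m+1}$ to a canonical configuration; the frozen variables $X_i^{m+1,0}$ then pin down these vectors up to finite ambiguity. Next, I would recover the $w_j$ iteratively: for each $j$, the $m$ diagonal variables $X_{\cdot,j}^{\alpha,\beta}$ attached to the two diagonals incident to $w_j$, together with the frozen variable $X_j^{0,m+1}$, provide $m+1$ multilinear equations in the $m+1$ coordinates of $w_j$ (given already-determined neighbors), which is a generically nondegenerate system. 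Finally, $A$ is recovered from the identifications $v_{i+n}=Av_i$ and $w_{i+n}=Aw_i$, which yield more than enough equations to pin down $A\in\SL(V)$.

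\textbf{Step 3 (Conclusion).} A dominant rational map between irreducible varieties of the same dimension induces an inclusion of function fields of equal transcendence degree. Consequently the pullbacks of the coordinate functions on $\mathbb A^{2n(m+1)}$, i.e.\ the variables of $\mathfrak T^*$, are algebraically independent in $\operatorname{Frac}(R_{0,2n,1}(V))$, establishing Theorem \ref{thm:aind}.

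\textbf{Main obstacle.} The real work lies in Step 2, where the matrix $A$ interacts with the wrap-around of the annulus: diagonals that cross the cut contribute variables such as $\det(Av_i,\dots,w_j)$, so $A$-dependence is intertwined with the $v,w$-dependence throughout. Untangling this coherently, and doing so in a way that remains valid even for small $n$ (when the normalization of $v_1,\dots,v_{m+1}$ already wraps past the cut and thus implicitly depends on $A$), is the delicate part. A robust fallback, should the direct inversion become too cumbersome, is to verify the Jacobian criterion at one explicit generic point, e.g.\ with $A$ diagonal with distinct eigenvalues and $v_i,w_j$ chosen so that the resulting $2n(m+1)\times 2n(m+1)$ Jacobian matrix has a clear triangular block structure in the variables $X_{i,j}^{\alpha,\beta}$ ordered by $\alpha$ along each diagonal.
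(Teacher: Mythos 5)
Your overall framework---show that the evaluation map from the quotient $(V^{2n}\times \SL(V))/\!/\SL(V)$ to affine space given by the $2n(m+1)$ seed variables is dominant, hence those variables are algebraically independent---is sound, and it is a genuinely different route from the paper's. The paper instead introduces the auxiliary ring $\tilde R_{0,2n+m+1,0}$ of invariants of the $2n+m+1$ vectors $v_i$, $w_i$, $A^iw_n$, computes its Krull dimension to be $2mn+2n$ by realizing it as a hypersurface in a Pl\"ucker algebra, and then shows that every generator of that ring (indeed every tensor-diagram invariant on the annulus) is a Laurent polynomial in the elements of $\mathfrak T^*$ via an explicit mutation sequence verified by skein relations. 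That said, your proposal has a genuine gap: Step 2 is a plan rather than an argument, and at least one of its assertions fails as stated.

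Concretely, recovering $A$ ``from the identifications $v_{i+n}=Av_i$ and $w_{i+n}=Aw_i$'' gives at most $2n(m+1)$ linear conditions on the $(m+1)^2-1$ unknown entries of $A\in\SL(V)$, which is not enough whenever $2n\le m$ (e.g.\ $n=1$, $m=3$, the case of Example~\ref{ex:1}: $8$ conditions for $15$ unknowns). In that regime $A$ is simply not determined by the vectors and their images, so the claim that these identifications ``yield more than enough equations'' is false; one must extract $A$ from the cut-crossing variables themselves, which is exactly the entanglement you flag as the ``main obstacle'' without resolving it. The generic nondegeneracy of the $(m+1)\times(m+1)$ system determining each $w_j$, the finiteness of the ambiguity in normalizing the $v$'s, and the assertion that the quotient has dimension $2n(m+1)$ (which needs generic orbits to be closed with trivial stabilizer) are likewise asserted but not verified, and the Jacobian fallback is not carried out. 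The paper's route sidesteps all of this: once every generator of a ring of known transcendence degree $2mn+2n$ is exhibited as a rational (in fact Laurent) function of the $2mn+2n$ seed variables, independence is immediate, with no case analysis on $n$ versus $m$.
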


\subsection{Zamolodchikov $\mu_- \mu_+$ dynamics as triangulation evolution}
Now we argue that as we keep applying the mutation sequences $\mu_+$ and $\mu_-$, we keep getting seeds associated with narrow triangulations. This follows from the following lemma.

Assume a narrow triangulation $T$ contains diagonals $v_i w_j$, $v_{i+1} w_j$ and $v_{i+1} w_{j+1}$. 
\begin{lemma} \label{lem:locmut}
 Mutating all variables $X_{i+1,j}^{\alpha, \beta}$ in $\mathfrak T$ results in the variables $X_{i,j+1}^{\alpha, \beta}$ sitting on the diagonal $v_i w_{j+1}$ of the narrow triangulation obtained from $T$ by changing one diagonal. 
\end{lemma}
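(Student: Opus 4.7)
The plan is to verify the lemma by performing the $m$ mutations one at a time in the order $\alpha = 1, 2, \ldots, m$ (from the $w_j$ end of the diagonal $v_{i+1}w_j$ toward $v_{i+1}$), and at each step recognizing the cluster exchange relation as a classical three-term Pl\"ucker identity among the determinantal invariants $X^{\alpha,\beta}$.

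First I would identify the neighbors of $X_{i+1,j}^{\alpha,\beta}$ in the quiver $\mathfrak{T}$ directly from the recipe in Figure~\ref{fig:zint9}: two neighbors $X_{i+1,j}^{\alpha\pm 1, \beta\mp 1}$ along the diagonal itself (with the frozen $X_i^{m+1,0}$ substituted when $\alpha = m$, and $X_j^{0,m+1}$ when $\alpha = 1$), plus one variable from each of the two adjacent narrow triangles $(v_i, v_{i+1}, w_j)$ and $(v_{i+1}, w_j, w_{j+1})$. The alternation of arrow directions inside each triangle pairs these four neighbors as two incoming and two outgoing, and I would compute the resulting exchange monomials explicitly.

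The key computation is the Pl\"ucker identity
\[
X_{i+1,j}^{\alpha,\beta}\cdot X_{i,j+1}^{\alpha,\beta}
\;=\; X_{i,j}^{\alpha,\beta}\cdot X_{i+1,j+1}^{\alpha,\beta}
\;+\; X_{i,j}^{\alpha+1,\beta-1}\cdot X_{i+1,j+1}^{\alpha-1,\beta+1},
\]
which, after moving $v_i, v_{i+\alpha}, w_{j+1}, w_{j-\beta+1}$ to standardized positions via sign-changes in the volume forms, reduces to the three-term Pl\"ucker relation among brackets $\langle A, x, y\rangle$ with common block $A = (v_{i+1},\ldots,v_{i+\alpha-1}, w_j,\ldots, w_{j-\beta+2})$. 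I would establish it either by Laplace expansion of an auxiliary $(m+2)\times(m+2)$ determinant, or more transparently by applying the skein relation of Figure~\ref{fig:zint7} to the tensor diagram for the product on the left-hand side, whose two ``vertical columns'' of vectors share $\alpha-1$ vectors on the $v$-side and $\beta-1$ on the $w$-side.

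Induction then extends the argument to all of $\alpha = 1,\ldots,m$: after the first $\alpha - 1$ mutations the local quiver has a predictable hybrid form in which $X_{i+1,j}^{\alpha,\beta}$ still has the same four (old and new) neighbors, and the same Pl\"ucker identity still governs its mutation. After all $m$ steps the diagonal $v_{i+1}w_j$ has been flipped to $v_i w_{j+1}$ and its new planted variables are the $X_{i,j+1}^{\alpha,\beta}$, with a direct check showing that the quiver mutation rules produce exactly the quiver attached by Figure~\ref{fig:zint9} to the new narrow triangulation. The main obstacle I anticipate is bookkeeping: matching exchange monomials to the correct neighbor products in the intermediate quivers, correctly handling the frozen variables at the two ends of the diagonal, and, for odd $m$, keeping signs coherent by fixing once and for all a base edge-fragment labeling compatible with the cyclic orientation of the annulus so that the Pl\"ucker identity holds on the nose rather than up to a sign.
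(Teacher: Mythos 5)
Your key identity is exactly the one the paper's proof rests on, namely the three-term Pl\"ucker relation
\[
X_{i+1,j}^{\alpha,\beta}\, X_{i,j+1}^{\alpha,\beta}
= X_{i,j}^{\alpha+1,\beta-1}\,X_{i+1,j+1}^{\alpha-1,\beta+1}
+ X_{i,j}^{\alpha,\beta}\,X_{i+1,j+1}^{\alpha,\beta}
\]
with common block $(v_{i+1},\ldots,v_{i+\alpha-1},w_j,\ldots,w_{j-\beta+2})$, proved by ordering all $v$'s before all $w$'s (equivalently, passing to a large Grassmannian on the universal cover of the annulus). However, your combinatorial setup contains a genuine error. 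You take the neighbors of $X_{i+1,j}^{\alpha,\beta}$ in $\mathfrak T$ to be the two variables $X_{i+1,j}^{\alpha\pm1,\beta\mp1}$ on the same diagonal plus one variable from each adjacent triangle. If that were the quiver, then $X_{i+1,j}^{\alpha+1,\beta-1}$ and $X_{i+1,j}^{\alpha-1,\beta+1}$ would each have to appear as a factor in one of the two exchange monomials at $X_{i+1,j}^{\alpha,\beta}$ --- but neither occurs in the Pl\"ucker identity you then invoke, so your ``explicit computation of the exchange monomials'' cannot land on that identity. In the seed attached to a narrow triangulation the variables on a single diagonal are pairwise \emph{non-adjacent}: the four neighbors of $X_{i+1,j}^{\alpha,\beta}$ are $X_{i,j}^{\alpha,\beta}$ and $X_{i,j}^{\alpha+1,\beta-1}$ on the diagonal $v_iw_j$, and $X_{i+1,j+1}^{\alpha,\beta}$ and $X_{i+1,j+1}^{\alpha-1,\beta+1}$ on the diagonal $v_{i+1}w_{j+1}$ (with frozen variables substituted at the extreme values of $\alpha$ via $X_{i,j}^{m+1,0}=X_i^{m+1,0}$ and $X_{i+1,j+1}^{0,m+1}=X_{j+1}^{0,m+1}$). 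The $A_m$-direction edges of the box product are realized as the slanted edges $X_{i,j}^{\alpha+1,\beta-1}\,\hbox{---}\,X_{i+1,j}^{\alpha,\beta}\,\hbox{---}\,X_{i+1,j+1}^{\alpha-1,\beta+1}$ joining consecutive diagonals, so a column of $A_m\square A_{2n-1}^{(1)}$ is a staircase through $m$ consecutive diagonals, not a single diagonal; this is also what makes all variables on one diagonal the same color, so that $\mu_\pm$ in Corollary \ref{cor:evol} acts on unions of whole diagonals.

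Two consequences for your plan. First, the apparatus of mutating in the order $\alpha=1,\ldots,m$ and controlling ``hybrid'' intermediate quivers by induction is both unnecessary and unavailable: since the $m$ vertices on $v_{i+1}w_j$ are pairwise non-adjacent, the $m$ mutations commute by Lemma \ref{lem:comm}, and each one is governed already in $\mathfrak T$ itself by the displayed Pl\"ucker relation, with the in/out split of the four neighbors matching its two monomials. Second, with your neighbor structure the argument would fail at the very first step: mutating $X_{i+1,j}^{1,m}$ in the quiver you describe yields an exchange monomial containing $X_{i+1,j}^{2,m-1}$, and the resulting new variable is not $X_{i,j+1}^{1,m}$. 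The gap, then, is the identification of the quiver of $\mathfrak T$; once that is corrected, your Pl\"ucker computation and your remark about fixing base edge-fragment labellings to control signs for odd $m$ complete the proof exactly as in the paper.
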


\begin{proof}
Let us identify $X_i^{m+1,0}$ with $X_{i,j}^{m+1,0}$ for any $j$, and similarly $X_j^{0,m+1}$ with $X_{i,j}^{0,m+1}$ for any $i$.
The claim of the lemma follows from the following relation:
 $$X_{i+1,j}^{\alpha, \beta} X_{i,j+1}^{\alpha, \beta} = X_{i,j}^{\alpha+1, \beta-1} X_{i+1,j+1}^{\alpha-1, \beta+1} + X_{i,j}^{\alpha, \beta} X_{i+1,j+1}^{\alpha, \beta}.$$
  This relation is nothing else but a Pl\"ucker relation in a ring $R_{0,\infty,0}$ where we include all vectors $v_k$, $k = - \infty, \ldots, + \infty$ and $w_k$, $k = - \infty, \ldots, + \infty$, 
  ordered so that all the $v$-s precede all the $w$-s. In other words, this is just a relation in a large enough Grassmannian, which one can identify with the universal cover of the original annulus. 
 Note also that because of the ordering on $v$-s and $w$-s, the signs in the relation are exactly as they are shown. 
\end{proof}

\begin{corollary} \label{cor:evol}
 After application of $(\mu_- \mu_+)^k$, the resulting seed $\mathfrak T_k^*$ is the one associated with the triangulation $T_k$ created by diagonals $v_i w_{i+2k}$ and $v_i w_{i+2k-1}$, $i = 1, \ldots, n$.
\end{corollary}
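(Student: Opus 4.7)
I would proceed by induction on $k$. The base case $k = 0$ is immediate from the construction of $\mathfrak{T}^*$. For the inductive step, observe that the quiver underlying $\mathfrak{T}^*_{k-1}$ (together with its bipartite coloring and frozen decoration) is combinatorially identical to the quiver of $\mathfrak{T}^* = \mathfrak{T}^*_0$; only the cluster variables decorating its vertices have shifted. Since the combinatorial action of $\mu_-\mu_+$ depends only on the quiver, it suffices to prove the single statement $\mu_-\mu_+(\mathfrak{T}^*) = \mathfrak{T}^*_1$.

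The geometric content is that the narrow triangulation $T_1$ arises from $T_0$ by flipping each of its $2n$ diagonals exactly once. Each flip, by Lemma~\ref{lem:locmut}, increases the common invariant $d = j - i$ of a diagonal $v_i w_j$ by $+2$, so $T_0$'s two families $d \in \{-1,0\}$ become $T_1$'s two families $d \in \{1,2\}$. Moreover, Lemma~\ref{lem:locmut} asserts that each such flip is realized by mutating the $m$ cluster variables on a single column of the quiver, producing precisely the planted invariants $X^{\alpha,\beta}_{i,j}$ attached to the newly inserted diagonal. Accumulated over $2n$ diagonals, this requires $2nm$ mutations, which matches the total number of mutations in $\mu_-\mu_+$.

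The heart of the argument is then to verify that $\mu_-\mu_+$ agrees, as a composite mutation, with this sequence of $2n$ column flips. Lemma~\ref{lem:comm} provides many commutations between mutations at non-adjacent vertices, permitting reshuffling of the $2nm$ mutations into alternative orders. The plan is to use these commutations to transform the column-by-column order (whose correctness is guaranteed by Lemma~\ref{lem:locmut}) into the bipartite order (whose final seed is $\mu_-\mu_+(\mathfrak{T}^*)$), thereby showing the two give identical cluster variables.

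The main obstacle I anticipate is that the $m$ mutations inside a single column flip are at $A_m$-adjacent vertices, hence do not mutually commute and must be executed in a definite order; meanwhile, the bipartite order ``all black, then all white'' necessarily cuts each column flip across its $m$ constituents, since along a column the colors alternate with $\alpha$. To resolve this, I would work diagonal-family by diagonal-family, using Lemma~\ref{lem:comm} to commute non-adjacent mutations (in particular, mutations belonging to non-adjacent columns) past one another. The delicate bookkeeping step is verifying that after interleaving the ``black'' parts of the various column flips into a single batch (producing $\mu_+$) and the ``white'' parts into another (producing $\mu_-$), the intermediate quivers that appear along the way are consistent with the Pl\"ucker-style update of Lemma~\ref{lem:locmut} still applying at each stage. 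I expect this verification to reduce to a local computation in the subquiver surrounding a pair of adjacent diagonals (a single narrow triangle together with its mirror), small enough to be carried out by hand and then propagated around the annulus by the rotational symmetry of the construction.
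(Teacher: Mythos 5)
Your overall scaffolding (induction on $k$, reduction to one step, invoking Lemma~\ref{lem:locmut} for each diagonal flip) is the right one, but the step you single out as ``the heart of the argument'' rests on a misreading of how $A_m \square A_{2n-1}^{(1)}$ sits on the annulus, and the verification you defer is precisely where that misreading would surface. The $m$ variables $X^{\alpha,\beta}_{i+1,j}$ planted on a single diagonal are \emph{not} an $A_m$-column of the box product and are \emph{not} pairwise adjacent in the quiver: the exchange relation in Lemma~\ref{lem:locmut} shows that all four quiver-neighbors of $X^{\alpha,\beta}_{i+1,j}$ lie on the two cyclically adjacent diagonals $v_iw_j$ and $v_{i+1}w_{j+1}$ (no variable on the diagonal $v_{i+1}w_j$ itself appears in either exchange monomial). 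So the $m$ mutations making up one diagonal flip pairwise commute by Lemma~\ref{lem:comm} --- which is also why Lemma~\ref{lem:locmut} can say ``mutating all variables $X^{\alpha,\beta}_{i+1,j}$'' without specifying an order. The $A_m$-direction of the box product runs transversally to the diagonals: a column with fixed $A_{2n-1}^{(1)}$-coordinate picks up one variable from each of $m$ consecutive diagonals.

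Once this is set straight, your anticipated obstacle evaporates rather than needing to be overcome. Every edge of the quiver joins variables on cyclically adjacent diagonals and the quiver is connected, so its proper $2$-coloring is unique up to swapping colors and is necessarily constant on each diagonal; the two color classes are exactly the two families of diagonals ($v_iw_{i+2k-1}$ versus $v_iw_{i+2k}$). Hence $\mu_+$ \emph{is} the simultaneous, mutually commuting flip of all diagonals of one family (each such diagonal is flanked by two diagonals of the other, untouched, family, so Lemma~\ref{lem:locmut} applies verbatim to each), and $\mu_-$ is the same for the other family. The paper's proof is just this observation plus the induction: $\mu_+$ takes $T_k$ to the triangulation with diagonals $v_iw_{i+2k}$ and $v_iw_{i+2k+1}$, and $\mu_-$ then yields $T_{k+1}$. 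Your proposal, by contrast, never carries out the interleaving and commutation bookkeeping it promises, and as described that bookkeeping is aimed at reconciling two mutation orders that in fact already coincide.
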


\begin{proof}
 Proof is by induction on $k$, the base case $k=0$ holding by definition. Applying $\mu_+$ to $\mathfrak T_k^*$ means mutating at all variables $X_{i,i+2k-1}^{\alpha, \beta}$, resulting in a seed associated with triangulation created by the diagonals 
 $v_i w_{i+2k}$ and $v_i w_{i+2k+1}$. Next, applying $\mu_-$ means mutating at all variables $X_{i,i+2k}^{\alpha, \beta}$, resulting in a seed associated with triangulation created by the diagonals $v_i w_{i+2k+2}$ and $v_i w_{i+2k+1}$,
 i.e. exactly in $\mathfrak T_{k+1}^*$.
\end{proof}

\begin{example}
 Figure~\ref{fig:zint11} shows how a single application of $\mu_- \mu_+$ looks locally.
\begin{figure}[ht]
    \begin{center}
\vspace{-.1in}
\epsfig{file=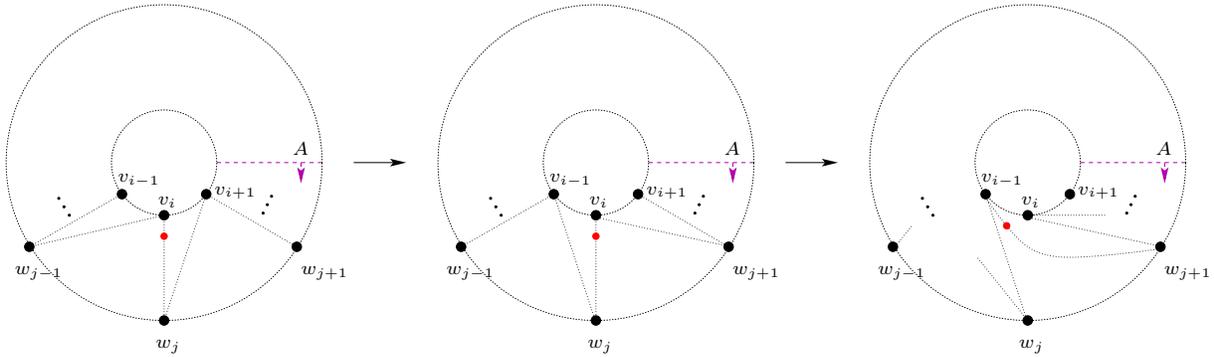, scale=1}
\vspace{-.1in}
    \end{center} 
    \caption{Evolution of the narrow triangulation $T_k$ under $\mu_- \mu_+$}
    \label{fig:zint11}
\end{figure}
The red dot represents the variable $X_{i,j}^{m,1}$, which mutates into $X_{i-1,j+1}^{m,1}$.
\end{example}

The following theorem can be proved using the standard technique formulated for example in \cite[Proposition 3.6]{FP}. 
\begin{theorem} \label{thm:ca}
 The initial seed $\mathfrak T^*$ gives rise to a cluster algebra inside $R_{0,2n,1}$.
\end{theorem}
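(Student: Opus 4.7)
The plan is to invoke the ``starfish''-style criterion of \cite[Proposition 3.6]{FP}, which gives sufficient conditions for a candidate initial seed sitting inside a normal domain to generate a cluster subalgebra of that domain. The hypotheses to be checked are: (i) the initial cluster variables lie in $R_{0,2n,1}$, (ii) they are algebraically independent, (iii) the exchange relation at each mutable vertex of the initial seed is satisfied by elements of $R_{0,2n,1}$, and (iv) the initial cluster variables satisfy the necessary irreducibility/coprimality conditions inside $R_{0,2n,1}$. Granting these, the Laurent phenomenon plus the UFD structure of the ambient ring forces every cluster variable obtained from $\mathfrak{T}^*$ by any sequence of mutations to remain inside $R_{0,2n,1}$, yielding the theorem.

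Condition (i) is automatic since each $X_{i,j}^{\alpha,\beta}$, $X_i^{m+1,0}$, $X_j^{0,m+1}$ is by construction an $\SL_{m+1}$-invariant volume form. Condition (ii) is Theorem \ref{thm:aind}. For condition (iii), the verification reduces to a local calculation inside a pair of adjacent narrow triangles: one checks that the quiver of $\mathfrak{T}^*$ shown in Figure \ref{fig:zint9} places precisely the right monomials $\prod_{y\to z}y$ and $\prod_{z\to y}y$ around each mutable vertex $X_{i+1,j}^{\alpha,\beta}$ so that the exchange relation becomes exactly the Pl\"ucker-type identity
\[
X_{i+1,j}^{\alpha,\beta}\,X_{i,j+1}^{\alpha,\beta} \;=\; X_{i,j}^{\alpha+1,\beta-1}\,X_{i+1,j+1}^{\alpha-1,\beta+1} + X_{i,j}^{\alpha,\beta}\,X_{i+1,j+1}^{\alpha,\beta}
\]
established in Lemma \ref{lem:locmut}. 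In the boundary cases $\alpha = m$ or $\beta = m$ one of the two products degenerates to involve a frozen variable $X_i^{m+1,0}$ or $X_j^{0,m+1}$, and one uses the conventions $X_{i,j}^{m+1,0}=X_i^{m+1,0}$, $X_{i,j}^{0,m+1}=X_j^{0,m+1}$ to absorb this into the same Pl\"ucker identity.

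Condition (iv) amounts to checking that each initial cluster variable is an irreducible element of $R_{0,2n,1}$ and that distinct initial variables are pairwise coprime. Since $R_{0,2n,1}$ is a normal domain, indeed a UFD (it is the ring of $\SL_{m+1}$-invariants on an affine space, with the unstable locus of codimension at least two), it suffices to show irreducibility and coprimality on the generic stratum where the $v_i$, $w_j$ are in general position. There, each volume form $\langle v_i,\ldots,w_{j-\beta+1}\rangle$ is a Pl\"ucker coordinate on the universal cover Grassmannian (as observed in the proof of Lemma \ref{lem:locmut}), and the standard Pl\"ucker coordinates are well-known to be irreducible and pairwise coprime. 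Combining (i)--(iv) with the Laurent phenomenon then delivers the theorem.

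The main obstacle is the careful bookkeeping in step (iii): one must match, for every mutable vertex of $\mathfrak{T}^*$ and in every location (interior of a triangle, along a diagonal, at the two boundary components, and across the $A$-cut), the quiver arrows in Figure \ref{fig:zint9} with the correct incoming and outgoing monomials of the Pl\"ucker relation, making sure that indexing through $v_i = A v_{i-n}$ and $w_i = A w_{i-n}$ preserves the relation. All of this is routine in any single narrow-triangle patch; the content of the argument is checking that the local patches glue consistently around the annulus and across the cut. Once this is done, the cited proposition applies verbatim.
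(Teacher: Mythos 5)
Your proposal is correct and follows essentially the same route as the paper: both invoke the starfish-type criterion of \cite[Proposition 3.6]{FP}, verify that the cluster variables adjacent to $\mathfrak T^*$ lie in $R_{0,2n,1}$ via the Pl\"ucker exchange relation of Lemma \ref{lem:locmut}, and handle the coprimality/irreducibility conditions by reduction to Pl\"ucker coordinates as in \cite{FP, FP2}. The paper omits the same technical details you defer, so no substantive difference remains.
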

The proof requires one to check that 
\begin{itemize}
 \item all cluster variables in seeds adjacent to $\mathfrak T^*$ indeed lie in $R_{0,2n,1}$ - this has effectively  been  done in Lemma \ref{lem:locmut};
 \item all such adjacent cluster variables are relatively prime with variables in $\mathfrak T^*$.
\end{itemize}
The latter can be done similarly to how it was done in \cite{FP, FP2}. We omit the technical details.

\subsection{Integrability via Dehn twists}

Now, we can see a conceptual explanation of Zamolodchikov integrability. The key is the following easy corollary of Lemma \ref{lem:locmut} and Corollary \ref{cor:evol}.

\begin{corollary}
 We have $$(\mu_- \mu_+)^n (X_{i,j}^{\alpha, \beta}) = X_{i-n,j+n}^{\alpha, \beta}.$$
\end{corollary}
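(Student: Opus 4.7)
The plan is to proceed by induction on $n$, with Lemma \ref{lem:locmut} and Corollary \ref{cor:evol} as the main tools. The idea is that a single application of $\mu_- \mu_+$ shifts the index pair $(i,j)$ of the variable sitting at a fixed quiver vertex by $(-1,+1)$; iterating $n$ times then yields the claimed shift $(-n,+n)$.

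For the inductive step, suppose that after $k$ applications of $\mu_- \mu_+$, the variable originally sitting at the quiver vertex carrying $X_{i,j}^{\alpha,\beta}$ has become $X_{i-k,j+k}^{\alpha,\beta}$. By Corollary \ref{cor:evol} the underlying triangulation at this stage is $T_k$, whose diagonals are precisely of the forms $v_p w_{p+2k-1}$ and $v_p w_{p+2k}$. The current variable $X_{i-k,j+k}^{\alpha,\beta}$ lives on a diagonal of one of these two types, and the two narrow triangles adjacent to it in $T_k$ contain exactly the auxiliary diagonals $v_{i-k-1} w_{j+k}$ and $v_{i-k} w_{j+k+1}$ (or their mirror images) needed to invoke Lemma \ref{lem:locmut}. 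Since each quiver vertex is mutated exactly once during a single $\mu_- \mu_+$ (once during its color step), the lemma transforms $X_{i-k,j+k}^{\alpha,\beta}$ into $X_{i-k-1,j+k+1}^{\alpha,\beta}$, closing the induction.

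The main subtlety I anticipate is purely bookkeeping: one must verify that at every stage the hypotheses of Lemma \ref{lem:locmut} remain satisfied, and that the bipartite coloring used to define $\mu_+$ and $\mu_-$ evolves consistently so that every vertex is indeed mutated exactly once per cycle. Both facts are built into the uniform description of $T_k$ furnished by Corollary \ref{cor:evol}, and the $A$-equivariance $v_i = A v_{i-n}$, $w_j = A w_{j-n}$ together with $\det A = 1$ ensures that the indexing shift $(i,j) \mapsto (i-n,j+n)$ remains well defined after the full $n$-fold iteration. Setting $k = n$ in the induction then yields the stated identity.
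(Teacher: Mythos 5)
Your proposal is correct and follows essentially the same route as the paper, which presents this statement as an immediate consequence of Lemma \ref{lem:locmut} and Corollary \ref{cor:evol}: each cycle $\mu_-\mu_+$ mutates the variable at a given vertex exactly once (in its colour step), and by Lemma \ref{lem:locmut} that single mutation shifts the indices by $(-1,+1)$, so $n$ cycles give $(-n,+n)$. The bookkeeping you flag (that the triangulation at each stage is of the form required by Lemma \ref{lem:locmut}) is exactly what Corollary \ref{cor:evol} supplies, so the argument is complete.
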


This means that the tensor diagram representing $(\mu_- \mu_+)^n (X_{i,j}^{\alpha, \beta})$ is the tensor diagram representing $X_{i,j}^{\alpha, \beta}$ to which one twice applied a Dehn twist. 
\begin{example}
 Figure~\ref{fig:zint12} gives an example for $m=4$ of what $(\mu_- \mu_+)^n$ does to a variable $X_{i-1,j}^{2,2}$. 
 \begin{figure}[ht]
    \begin{center}
\vspace{-.1in}
\epsfig{file=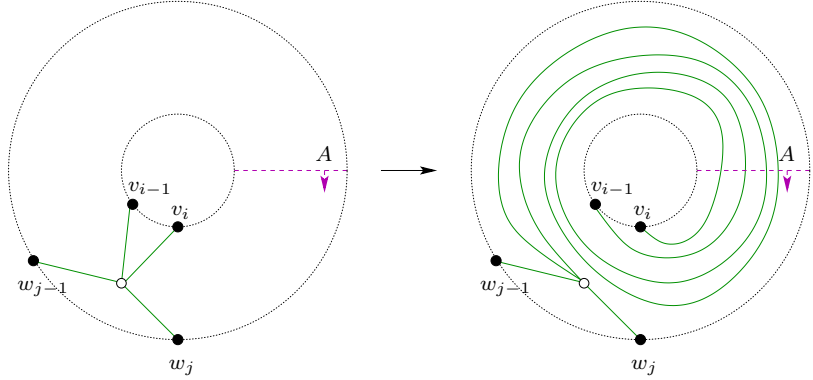, scale=1}
\vspace{-.1in}
    \end{center} 
    \caption{A tensor diagram representing $X_{i-1,j}^{2,2}$ and the same diagram with Dehn twist applied twice, resulting in $X_{i-2n-1,j}^{2,2}$.}
    \label{fig:zint12}
\end{figure}
\end{example}

Now we are ready to prove Theorem \ref{thm:main}.

\begin{theorem*} 
Conjecture \ref{conj:main} holds in the case of quivers $A_m \square A_{2n-1}^{(1)}$. 
\end{theorem*}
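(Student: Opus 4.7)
The plan is to exploit the Dehn twist corollary $(\mu_-\mu_+)^n(X_{i,j}^{\alpha,\beta})=X_{i-n,j+n}^{\alpha,\beta}$ from Section 3.3, combined with Theorems~\ref{thm:ca} and~\ref{thm:aind}, which realize the cluster structure inside $R_{0,2n,1}$. Since the $2n(m+1)$ variables of $\mathfrak T^*$ are algebraically independent and $R_{0,2n,1}$ has Krull dimension $2n(m+1)$ (as $SL_{m+1}$ acts with generically trivial stabilizer on $V^{2n}\times SL_{m+1}$), the initial cluster generates the full fraction field of $R_{0,2n,1}$; in particular, the $SL_{m+1}$-invariants $\operatorname{tr}(A^s)$ are rational functions in the initial cluster variables. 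Consequently, it suffices to derive a linear recurrence for the iterates $(\mu_-\mu_+)^t(X_{i,j}^{\alpha,\beta})$ inside $R_{0,2n,1}$; such a recurrence will automatically descend to a relation in the abstract field $\mathbb F$.

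Under the conventions $v_i=Av_{i-n}$ and $w_i=Aw_{i-n}$, each Dehn twist replaces every $v$-vector by $A^{-1}v$ and every $w$-vector by $Aw$. After $k$ iterations, $SL_{m+1}$-invariance of the top volume form moves the entire matrix action onto one side:
$$
(\mu_-\mu_+)^{nk}\bigl(X_{i,j}^{\alpha,\beta}\bigr)
\,=\,\bigl\langle v_i,\ldots,v_{i+\alpha-1},\,A^{2k}w_j,\ldots,A^{2k}w_{j-\beta+1}\bigr\rangle
\,=\,\bigl\langle \Lambda_v,\ \textstyle\bigwedge^{\beta}(A^{2k})\cdot \Lambda_w\bigr\rangle,
$$
where $\Lambda_v\in\bigwedge^\alpha V$ and $\Lambda_w\in\bigwedge^\beta V$ are the wedge products of the listed vectors (fixed, independent of $k$) and the pairing is the one induced from the top volume form on $V$.

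The key algebraic input is Cayley--Hamilton applied to $B:=\bigwedge^\beta(A^2)\in\operatorname{End}(\bigwedge^\beta V)$, a space of dimension $N=\binom{m+1}{\beta}=\binom{m+1}{\alpha}$. The characteristic polynomial of $B$ produces scalars $c_1,\ldots,c_N$ (polynomial functions of traces of exterior powers of $A$) with $B^{k+N}=c_1 B^{k+N-1}+\cdots+c_N B^k$ for every $k\geq 0$. Pairing against $\Lambda_v$ and $\Lambda_w$ yields a linear recurrence of order $N$ for the subsequence $k\mapsto(\mu_-\mu_+)^{nk}(X_{i,j}^{\alpha,\beta})$, and interleaving the $n$ residue classes modulo $n$ combines these into a single linear recurrence of order $nN=n\binom{m+1}{\alpha}$ for $t\mapsto(\mu_-\mu_+)^t(X_{i,j}^{\alpha,\beta})$. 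This simultaneously proves Theorem~\ref{thm:main} and the bound of Theorem~\ref{thm:spd}: the index $j$ there equals $\alpha$ when counting the $A_m$-vertex from one end and $\beta=m+1-\alpha$ when counting from the other, and the two bounds agree since $\binom{m+1}{\alpha}=\binom{m+1}{\beta}$.

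The principal obstacle is the deferred Theorem~\ref{thm:aind}: algebraic independence of the initial cluster is essential for the pullback step, since otherwise the evaluation map $\mathbb F\hookrightarrow\operatorname{Frac}(R_{0,2n,1})$ need not be injective and the recurrence inside $R_{0,2n,1}$ could fail to descend. A secondary technical issue is verifying that the Cayley--Hamilton coefficients lie in the image of $\mathbb F$, but this is handled uniformly by the Krull dimension count above; for an explicit formula, each $\operatorname{tr}(A^s)$ can be written as a closed tensor diagram on the annulus and reduced to a polynomial in the planted Pl\"ucker-like functions via the skein relations of Section~2.
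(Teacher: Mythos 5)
Your proof is correct and follows essentially the same route as the paper: reduce to the realization in $R_{0,2n,1}$ via Theorem~\ref{thm:aind}, use the Dehn-twist identity $(\mu_-\mu_+)^{n}(X_{i,j}^{\alpha,\beta})=X_{i-n,j+n}^{\alpha,\beta}$ to insert powers of $A^{\pm 2}$, and invoke Cayley--Hamilton to obtain the linear recurrence. The only (cosmetic) difference is that you apply Cayley--Hamilton directly to $\bigwedge^{\beta}(A^{2})$ on the $\binom{m+1}{\beta}$-dimensional space $\bigwedge^{\beta}V$, which yields the sharp bound of Theorem~\ref{thm:spd} in one step, whereas the paper first bounds via $(A^{-2})^{\otimes j}$ on $V^{\otimes j}$ and then sharpens using antisymmetry.
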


\begin{proof}
 According to Theorem \ref{thm:aind} it is enough to prove Zamolodchikov integrability within the ring $R_{0,2n,1}$. Indeed, any collection of $2mn$ algebraically independent variables may be taken to be mutable variables of a seed $\mathfrak T^*$, 
 while setting the coefficient variables of this seed to be $1$. 
 
 Now, consider a variable $X_{i,l}^{\alpha, \beta}$ of the seed $\mathfrak T^*$, let $j = \min(\alpha, \beta)$. 
  Observe that Dehn twists $(\mu_- \mu_+)^{tn}$ insert into tensor diagram representing $X_{i,l}^{\alpha, \beta}$ a factor $\A^t \otimes \A^t \otimes \dotsc \otimes \A^t$, $j$ factors total.  Here $\A = A^{-2}$. Since $\A \in SL_{m+1}$ satisfies its own characteristic polynomial of degree 
 $m+1$, we know that the 
 vector space of all matrices $\A^{t_1} \otimes \A^{t_2} \otimes \dotsc \otimes \A^{t_j}$ is spanned by the subset of generators given by $0 \leq t_1, \ldots, t_j < m+1$. Since the number of such tensor monomials is finite, we conclude that for large enough $N$ the monomials
 $\A^t \otimes \A^t \otimes \dotsc \otimes \A^t$, $t = 0,1,\ldots, N$ are linearly dependent. 
\end{proof}

\begin{example}
 Consider the case $m=3$ and consider the variable $X_{i-1,j}^{2,2}$ shown in Figure~\ref{fig:zint12}. Since $\A \in SL_4$ satisfies its own characteristic polynomial, we conclude that the set of all monomials $\A^{t_1} \otimes \A^{t_2}$ is generated by its subset with 
 $0 \leq t_1, t_2 < 4$. Indeed, $\A^4$ can be expressed through smaller powers of $\A$, and one can repeatedly apply this relation to get rid of any monomial with power of $\A$ higher than $3$. Overall, we see that the dimension of the space is then at most $16$, and thus 
 if we take $17$ of the powers $\A^t \otimes \A^t$, they must be linearly dependent. 
\end{example}

The order of linearizability that follows from this proof is too large however, i.e. the sequences in question are linearizable with a smaller order than that. Theorem \ref{thm:spd} states the order of linearizability which we believe to be minimal possible. Let us give an argument proving it now.

\begin{theorem*} 
In the case of quiver $A_m \square A_{2n-1}^{(1)}$, vertex $(q,q')$  is linearizable of order $$n \cdot {m+1 \choose j}.$$  
\end{theorem*}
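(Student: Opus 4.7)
The vertex $(q,q')$ is represented in the initial seed $\mathfrak T^*$ by a variable $X_{i,l}^{\alpha,\beta}$ with $\alpha+\beta=m+1$, where $\alpha$ equals $j$ or $m+1-j$ depending on which end of $A_m$ we count from. Since $\binom{m+1}{j}=\binom{m+1}{m+1-j}$, we may assume $\alpha=j\le\beta$. My plan is to refine the proof of Theorem~\ref{thm:main} by replacing the crude bound $(m+1)^j$ on the relevant tensor dimension with the sharp bound $\binom{m+1}{j}$ coming from an exterior power.

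Iterating Corollary~\ref{cor:evol} gives $(\mu_-\mu_+)^s X_{i,l}^{\alpha,\beta}=X_{i-s,\,l+s}^{\alpha,\beta}$ for every $s$. Writing $s=tn+r$ with $0\le r<n$ and using the defining relations $v_k=Av_{k-n}$, $w_k=Aw_{k-n}$, I would first extract a factor $A^{-t}$ from every $v$-argument and $A^{t}$ from every $w$-argument inside the volume form. Multiplying every argument uniformly by $A^{-t}$ (which is permissible since $A^{-t}\in\SL$) then cleans the $w$-side and deposits $\A^{t}=A^{-2t}$ on each of the $\alpha=j$ vectors on the $v$-side. The resulting expression is multilinear alternating in those $j$ slots, so it factors through the exterior power: for each residue class $r$ one obtains an identity of the shape
\[
Y_{tn+r}\;:=\;(\mu_-\mu_+)^{tn+r}X_{i,l}^{\alpha,\beta}\;=\;\phi_r\!\bigl((\wedge^j\A)^t\,\eta_r\bigr),
\]
where $\eta_r=v_{i-r}\wedge\cdots\wedge v_{i-r+j-1}\in\wedge^j V$ and $\phi_r\colon\wedge^j V\to R_{0,2n,1}$ is the linear map obtained by contracting the remaining $\beta$ fixed $w$-vectors against the volume form. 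The conceptual gain over the proof of Theorem~\ref{thm:main} is that only $\wedge^j\A$ appears rather than $\A^{\otimes j}$, because the volume form is already alternating in those $j$ arguments.

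Since $\dim\wedge^j V=\binom{m+1}{j}$, the Cayley--Hamilton theorem then supplies a monic polynomial relation $\sum_{k=0}^{d}c_k(\wedge^j\A)^k=0$ of degree $d=\binom{m+1}{j}$ whose coefficients depend only on $A$, not on $r$. Applying $\phi_r$ to the orbit of $\eta_r$ converts this into $\sum_{k=0}^{d}c_k Y_{s+kn}=0$ for every $s\in\mathbb Z$, which is a linear recurrence of order $nd=n\binom{m+1}{j}$ in $s$, as claimed. The main obstacle I anticipate is the careful bookkeeping of $A$-translations across the cut: one must verify that all $j$ factors of $A^{\pm t}$ really can be pulled out uniformly (with consistent sign conventions from Section~\ref{sec:ann}) and that the single Cayley--Hamilton relation applies simultaneously across all $n$ residue classes of $s\bmod n$, so that the recurrence indeed holds for every $s$ rather than only within a fixed arithmetic progression.
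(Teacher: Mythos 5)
Your proposal is correct and follows essentially the same route as the paper: the Dehn twist $(\mu_-\mu_+)^{n}$ inserts $\A=A^{-2}$ on the $j=\min(\alpha,\beta)$ strands of the connector, and the antisymmetry of the volume form reduces $\A^{\otimes j}$ to the exterior power, cutting the spanning set from $(m+1)^j$ monomials down to $\binom{m+1}{j}$. Your phrasing via the Cayley--Hamilton relation for $\wedge^j\A$ on the $\binom{m+1}{j}$-dimensional space $\wedge^j V$ is a slightly cleaner justification of the paper's count of monomials with $t_1<\dots<t_j$, but it is the same argument.
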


\begin{proof}
 The key observation is that $X_{i,l}^{\alpha, \beta}$ is an {\it {antisymmetric}} tensor in its arguments, since it is essentially the Levi-Cevita tensor. 
  Because of this, the list of monomials we used in the proof above
 $$\A^{t_1} \otimes \A^{t_2} \otimes \dotsc \otimes \A^{t_j}, \;\; 0 \leq t_1, \ldots, t_j < m+1$$
 can be shortened by requiring $t_1 < \dotsc < t_j$. The number of such monomials is ${{m+1} \choose j}$. Since it takes $n$ applications of $\mu_+ \mu_-$ to get to each next winding of the original tensor, we get order of the linear dependence to be 
 $n \cdot {{m+1} \choose j}$, as desired.

 
\end{proof}

\section{Off-belt variables}

Let us now consider any other variable $X$ obtained from one of the variables in $\mathfrak T^*$ by an arbitrary sequence of mutations $\m$. Thus, $X$ lies off the ``bipartite belt'' obtained from $\mathfrak T^*$ by repeated application 
of $\mu_- \mu_+$. Nevertheless, we can still define $\mu_- \mu_+(X)$, in fact we can do it in two equivalent ways:
\begin{itemize}
 \item as a result of substitution of the variables of the seed $\mathfrak T_1^*$ (obtained from $\mathfrak T^*$ by a single step of time evolution $\mu_- \mu_+$) into the formula expressing $X$ in terms of the seed $\mathfrak T^*$;
 \item as a result of $\m \mu_- \mu_+ \m^{-1} (X)$, where $\m^{-1}$ denotes applying the sequence of mutations $\m$ in reverse order.  
\end{itemize}

\begin{theorem} \label{thm:off}
 The transformation $\mu_- \mu_+$ is linearizable at $X$ for any cluster variable $X$ of the cluster algebra with the initial seed $\mathfrak T^*$.
\end{theorem}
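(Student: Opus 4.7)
My plan is to promote the Dehn-twist observation already exploited for the belt variables (Section 3) into a ring-theoretic statement about all of $R_{0,2n,1}$, and then transfer linearizability from the iterate $\sigma := (\mu_-\mu_+)^n$ back to $\mu_-\mu_+$ itself by a shifting argument. The main obstacle, as I discuss below, is establishing the key finite-dimensionality inside $R_{0,2n,1}$.

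First I would show that $\sigma$ lifts to a ring automorphism of $R_{0,2n,1}$. The substitution $\tau\colon (v_i,w_j,A) \mapsto (A^{-1}v_i,\, Aw_j,\, A)$ on $V^{2n} \times \SL(V)$ is $\SL(V)$-equivariant (a direct verification against $g\cdot(v,w,A) = (gv,gw,gAg^{-1})$), so $\tau^*$ descends to an automorphism $\sigma$ of the invariant ring. By Corollary \ref{cor:evol} together with $v_{i-n} = A^{-1}v_i$ and $w_{j+n} = Aw_j$, this $\sigma$ agrees with $(\mu_-\mu_+)^n$ on every initial cluster variable; since both are field automorphisms of $\mathbb{F} = \operatorname{Frac}(R_{0,2n,1})$ that agree on a transcendence basis, they coincide on all of $\mathbb{F}$.

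Next I would prove that for every $f \in R_{0,2n,1}$ the $\sigma$-cyclic subspace $V_f := \operatorname{span}_{\CC}\{\sigma^k(f) : k \in \ZZ\}$ is finite dimensional. By Theorem \ref{thm:procesi}, $R_{0,2n,1}$ is generated by the traces $\operatorname{tr}(A^k)$ and by volume forms $F = \langle A^{k_1}u_1,\ldots,A^{k_{m+1}}u_{m+1}\rangle$ with $u_i \in \{v_\ell,w_\ell\}$. Traces are $\sigma$-fixed, so $\dim V_{\operatorname{tr}(A^k)}=1$. For $F$, the iterate $\sigma^t(F)$ replaces each $A^{k_i}$ by $A^{k_i - t\epsilon_i}$ (with $\epsilon_i=+1$ if $u_i$ is a $v$, and $-1$ if a $w$), and Cayley--Hamilton confines the tensors $A^{k_1-t\epsilon_1} \otimes \cdots \otimes A^{k_{m+1}-t\epsilon_{m+1}}$ to the subspace $(\CC[A])^{\otimes(m+1)} \subseteq \operatorname{End}(V)^{\otimes(m+1)}$ of dimension at most $(m+1)^{m+1}$; contracting multilinearly against the fixed $u_i$'s then yields $\dim V_F < \infty$. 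Finiteness propagates to arbitrary $f \in R_{0,2n,1}$ via $V_{f+g} \subseteq V_f + V_g$ and $V_{fg} \subseteq V_f \cdot V_g$, so that $\dim V_{fg} \leq \dim V_f \cdot \dim V_g$ inductively for any polynomial expression in the generators. This finite-dimensionality check is the heart of the argument; once in place, the rest is bookkeeping.

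Finally, let $X$ be any cluster variable. By Theorem \ref{thm:ca}, $X \in R_{0,2n,1}$, so the previous paragraph supplies a nonzero polynomial $p(t)=\sum_{i=0}^D c_i t^i$ with $p(\sigma)(X)=0$. Writing $a_k := (\mu_-\mu_+)^k(X)$, this reads $\sum_{i=0}^D c_i a_{ni}=0$, and applying the $\CC$-algebra automorphism $(\mu_-\mu_+)^k$ to both sides (which fixes the scalars $c_i$) gives $\sum_{i=0}^D c_i a_{ni+k}=0$ for every $k \in \ZZ$. Solving for the leading term yields a constant-coefficient linear recurrence $a_K = \sum_{j=1}^D c_j' a_{K-jn}$ of order $nD$, valid for all $K$, which is precisely the linearizability required.
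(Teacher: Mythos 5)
Your overall strategy is sound and runs parallel to the paper's: the paper also reduces to the Procesi generators of $R_{0,2n,1}$, applies the Cayley--Hamilton/Dehn-twist argument to each generator (decomposed into a $v$-part, a Kronecker connector, and a $w$-part), and then closes up under sums and products --- via its Lemma \ref{lem:lin} on linearizable sequences --- before invoking Theorem \ref{thm:ca} to place every cluster variable inside $R_{0,2n,1}$. Your cyclic-subspace bookkeeping $V_{f+g}\subseteq V_f+V_g$ and $V_{fg}\subseteq V_f\cdot V_g$ is a clean substitute for that lemma (it avoids the Cauchy-identity manipulation of characteristic polynomials), and your explicit realization of $(\mu_-\mu_+)^n$ as the substitution $(v,w,A)\mapsto(A^{-1}v,Aw,A)$ makes precise something the paper leaves at the level of pictures. (One small caveat in that step: two field automorphisms agreeing on a transcendence basis need not coincide; you additionally need that the initial cluster variables \emph{generate} $\operatorname{Frac}(R_{0,2n,1})$, which is exactly what the Laurentness theorem of Section \ref{sec:aind} supplies.)

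The genuine gap is in your "heart of the argument": the span you take is over $\CC$, and over $\CC$ it is \emph{not} finite dimensional. For a generic $A$ the matrices $I,A,A^2,\dots$ are $\CC$-linearly independent (their entries are polynomials of distinct degrees in the coordinates of $A$); already for $m=1$ the functions $\langle v, A^s w\rangle$, $s\ge 0$, are $\CC$-linearly independent, so $V_F$ is infinite dimensional over $\CC$. Cayley--Hamilton expresses $A^{m+1}$ as a combination of lower powers with coefficients that are \emph{invariant functions of $A$}, not complex numbers, so the correct statement is finite-dimensionality over the subfield $\mathbb K\subset\mathbb F$ generated by the coefficients of the characteristic polynomial of $A$. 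This is repairable, but the repair breaks your last step as written: once the recurrence coefficients $c_i$ lie in $\mathbb K$ rather than $\CC$, you may no longer "apply $(\mu_-\mu_+)^k$, which fixes the scalars" --- you have only shown that $(\mu_-\mu_+)^n$ fixes functions of $A$, not that $\mu_-\mu_+$ itself does. To close this you need to realize the single step $\mu_-\mu_+$ (not just its $n$-th power) by a geometric substitution fixing $A$, namely the one-step rotation of the annulus $(v_1,\dots,v_n,w_1,\dots,w_n,A)\mapsto(A^{-1}v_n,v_1,\dots,v_{n-1},w_2,\dots,w_n,Aw_1,A)$, which by Lemma \ref{lem:locmut} agrees with $\mu_-\mu_+$ on the belt variables and hence on the whole field. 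With that addition both the finite-dimensionality and the interleaving step go through and your proof is complete.
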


\begin{lemma} \label{lem:lin}
 Term-wise sum and term-wise product of two linearizable sequences are also linearizable. 
\end{lemma}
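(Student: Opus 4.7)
The plan is to recast linearizability via the shift operator $S$ acting on bi-infinite sequences by $(Sa)_n = a_{n+1}$: a sequence $(a_n)$ is linearizable of order at most $k$ precisely when $p(S)a = 0$ for some nonzero polynomial $p$ of degree $k$ with constant scalar coefficients. With this reformulation in hand, both closure properties fall out quickly. They amount to the classical fact that linear recurrence (``C-finite'') sequences form a commutative algebra under the termwise operations.

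For the termwise sum I would simply observe that if $p(S)a = 0$ and $q(S)b = 0$, then, using that polynomials in $S$ commute, $(pq)(S)(a+b) = q(S)p(S)a + p(S)q(S)b = 0$. Hence $a+b$ is linearizable of order at most $\deg p + \deg q$, with annihilator dividing $\operatorname{lcm}(p,q)$.

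For the termwise product, the cleanest route I would take is through companion matrices and tensor products. Write $a_n = e_1^\top A^n v$ where $A$ is the $k \times k$ companion matrix of $p$, and similarly $b_n = f_1^\top B^n w$ with $B$ of size $\ell$; then $a_n b_n = (e_1 \otimes f_1)^\top (A \otimes B)^n (v \otimes w)$. The operator $A \otimes B$ acts on a $k\ell$-dimensional space, so by Cayley--Hamilton it satisfies a monic polynomial of degree at most $k\ell$, which then annihilates the product sequence, yielding linearizability of order at most $k\ell$. An equivalent route, available after passing to the algebraic closure, uses the Binet expansion $a_n = \sum_i \alpha_i(n)\lambda_i^n$, $b_n = \sum_j \beta_j(n)\mu_j^n$ and observes that $a_n b_n = \sum_{i,j} \alpha_i(n)\beta_j(n)(\lambda_i \mu_j)^n$ is of the same form, hence linearizable with eigenvalues among the pairwise products $\lambda_i \mu_j$.

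There is no genuine obstacle here: this lemma is a piece of standard linear recurrence theory, included only as a tool toward the proof of Theorem~\ref{thm:off}, where off-belt cluster variables will be expressed as polynomial (after clearing monomial Laurent denominators) combinations of the already-linearizable belt variables. The one point deserving a little care is that the coefficients of the new recurrence should lie in the original ground field of scalars; the tensor-product construction makes this automatic, since it never requires factoring a characteristic polynomial.
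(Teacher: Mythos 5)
Your proof is correct. The treatment of the sum is identical to the paper's: both amount to annihilating $a+b$ by the product $P(t)Q(t)$ of the two characteristic polynomials. For the product, the underlying fact is the same in both arguments --- the new characteristic roots are the pairwise products $x_iy_j$ --- but the packaging differs. The paper works directly with the roots: it defines $R(t)$ as the polynomial whose roots are the $x_iy_j$ and invokes the Cauchy identity $\prod_{i,j}(1+x_iy_j)=\sum_\lambda s_\lambda(x)s_{\lambda'}(y)$ to express the coefficients of $R$ through those of $P$ and $Q$, then asserts (via the implicit Binet expansion) that $R$ annihilates the product sequence. Your companion-matrix route obtains the same polynomial as the characteristic polynomial of $A\otimes B$ and annihilates the product sequence by Cayley--Hamilton; this buys you two small advantages: it handles repeated roots without any separate discussion of polynomial multiplicities in the Binet expansion, and, as you note, it keeps all coefficients in the ground field without passing to the algebraic closure. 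One point you gloss over (as does the paper) is that the sequences here are bi-infinite, so writing $a_n=e_1^\top A^nv$ for $n<0$ requires $A$ invertible; this is harmless, since on bi-infinite sequences the shift is invertible and one may strip any factor of $t$ from the annihilating polynomial, reducing to the case of nonzero constant term.
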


\begin{proof}
 To any linear recurrence $a_{i+n} = A_{n-1} a_{i+n-1} + \dotsc + A_1 a_{i+1} + A_0 a_i$ one can associate polynomial $P_a(t) = t^{n} - A_{n-1} t^{n-1} - \dotsc - A_0$. If we have two sequences with polynomials $P(t)$ and $Q(t)$, their sum is easily seen to satisfy 
 recurrence corresponding to the product $P(t)Q(t)$. One can also multiply polynomials in a non-standard way: if $x_1, \ldots, x_p$ are roots of $P(t)$ and $y_1, \ldots, y_q$ are roots of $Q(t)$, let $R(t)$ be the polynomial with $pq$ roots $x_i y_j$. One can express 
 the coefficients of $R(t)$ directly through the coefficients of $P(t)$ and $Q(t)$ using the Cauchy identity $$\prod_i \prod_j (1+x_i y_j) = \sum_{\lambda} s_{\lambda}(x) s_{\lambda'}(y).$$ It is easy to see that the product of two linearizable sequences with polynomials 
 $P(t)$ and $Q(t)$ is a linearizable sequence with polynomial $R(t)$.
\end{proof}

Now we are ready to prove Theorem \ref{thm:off}

\begin{proof}
 What made the proof of Zamolodchikov periodicity in the previous section work is the following fact. Each of the variables $X_{i,l}^{\alpha, \beta}$ can be written as a concatenation of three tensor diagrams: the $v$-part, the connector consisting of $j$
Kronecker tensors, and the $w$-part, see Figure~\ref{fig:zint13}. 
 \begin{figure}[ht]
    \begin{center}
\vspace{-.1in}
\epsfig{file=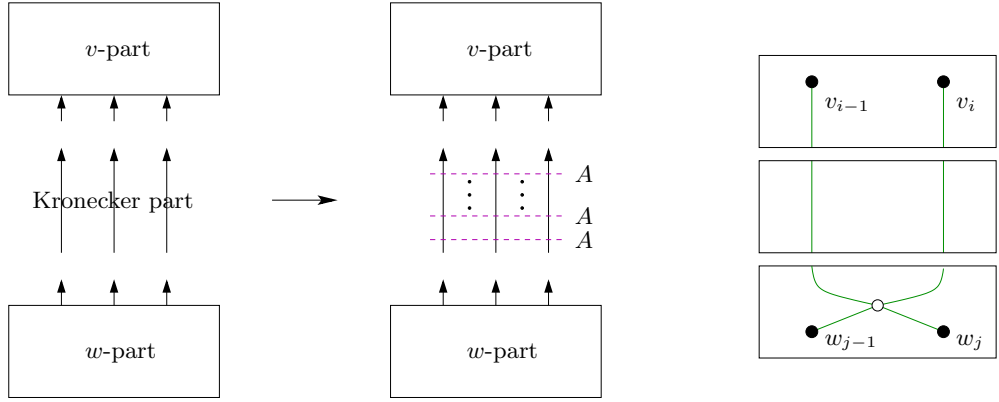, scale=1}
\vspace{-.1in}
    \end{center} 
    \caption{Schematic action of $(\mu_- \mu_+)^t$ on a tensor partitioned into $w$-, Kronecker and $v$-parts; partitioning of the Levi-Cevita tensor from Figure~\ref{fig:zint12}.}
    \label{fig:zint13}
\end{figure}
Then, each application of the square of Dehn twist could be viewed as fixing the $v$- and $w$-parts, and extending the Kronecker part in the middle by $\A \otimes \A \otimes \dotsc \otimes \A$. 

It is clear that the same proof works 
for {\it {any}} invariant that can be represented in such a tensor form. It remains to be noted that according to Theorem \ref{thm:procesi} all the generators of the ring $R_{0,2n,1}$ are representable by tensors. Then so are their products, and applying Lemma \ref{lem:lin} we 
conclude that any linear combination of those products is linearizable. By Theorem \ref{thm:ca} this means that all cluster variables $X$ are linearizable. 
\end{proof}

Note that although linearizability is preserved by addition and multiplication, it is not preserved in general by division. For example, the sequence $1,2,3,\ldots$ is linearizable, while $1,1/2,1/3,\ldots$ is not. Since every variable in the cluster algebra is a {\it {rational}}
expression in the variables of seed $\mathfrak T^*$, there is no a priori reason why they should exhibit Zamolodchikov integrability. This suggests the following conjecture.

\begin{conjecture}
 Assume a recurrent quiver exhibits Zamolodchikov integrability. Then so does every element of the associated upper cluster algebra. 
\end{conjecture}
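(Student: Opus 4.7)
The plan is to leverage the proof strategy of Theorem \ref{thm:off} and push it from cluster variables to arbitrary elements of the upper cluster algebra. What made that proof work for $A_m \square A_{2n-1}^{(1)}$ is a two-step structure: (i) every cluster variable lies in the ring of invariants $R_{0,2n,1}$, and (ii) Procesi's generation theorem (Theorem \ref{thm:procesi}) expresses every element of $R_{0,2n,1}$ as a polynomial in tensor-diagram invariants, each of which is linearizable by the Cayley--Hamilton argument for $\A = A^{-2}$. Lemma \ref{lem:lin} then propagates linearizability through sums and products. So for the specific case, the conjecture would reduce to showing that the upper cluster algebra, possibly after inverting the frozen variables, embeds in $R_{0,2n,1}[\text{frozens}^{-1}]$. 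Frozen variables are themselves linearizable (trivially, of order $1$, since $\mu_-\mu_+$ fixes them), so inverting them preserves the linearizability of everything expressible through them polynomially.

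First I would establish this containment. The Laurent phenomenon says each element $X$ of the upper cluster algebra is a Laurent polynomial in every cluster; combined with the Fomin--Pylyavskyy-style argument that cluster variables are relatively prime to the frozens (cf.\ the proof of Theorem \ref{thm:ca}), one can track valuations along divisors cut out by the generators of $R_{0,2n,1}$ and show that only frozen denominators can genuinely appear. Next I would verify that multiplying $X$ by a monomial in frozens lands in $R_{0,2n,1}$ itself, at which point Procesi plus Lemma \ref{lem:lin} finishes the linearizability claim in the specific case. This already proves the conjecture for the $A_m \square A_{2n-1}^{(1)}$ family and in effect upgrades Theorem \ref{thm:off} from cluster variables to the whole upper cluster algebra.

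For a general recurrent Zamolodchikov-integrable quiver one lacks a canonical ambient ring like $R_{0,2n,1}$, so the plan is to construct one abstractly from the dynamics. Concretely, treat $\mu_-\mu_+$ as a field automorphism of the ambient rational function field and consider the subring $\mathcal{L}$ generated by all linearizable elements; by Lemma \ref{lem:lin} this is an $\mu_-\mu_+$-stable subring, and by hypothesis it contains all initial cluster variables. One would then try to show that the upper cluster algebra is contained in $\mathcal{L}$, localized at a distinguished set of \emph{eigenelements} (analogues of the frozens, i.e.\ elements that are quasi-fixed by $\mu_-\mu_+$, hence linearizable of order $1$). The main obstacle is exactly that linearizability is not preserved under division: one must rule out the scenario where a legitimate element of the upper cluster algebra is a Laurent expression whose denominator is a linearizable but non-quasi-fixed element. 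Controlling which denominators may appear is the crux, and it is where I would expect to need either a positivity/parametrization result (e.g.\ that cluster monomials span the upper cluster algebra in a suitable completion) or an Arnold--Liouville-style conserved-quantity argument identifying the quasi-fixed subring intrinsically.
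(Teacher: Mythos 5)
This statement is an open conjecture in the paper, not a theorem: the author poses it precisely because linearizability is destroyed by division, so there is no proof of record, and your proposal does not close the gap. In the $A_m \square A_{2n-1}^{(1)}$ case everything hinges on the containment of the upper cluster algebra $\mathcal{U}$ in $R_{0,2n,1}$ localized at the frozen variables, which you propose to obtain by ``tracking valuations along divisors.'' That is not a routine step. The paper establishes only the \emph{opposite} containment: the unnumbered theorem of Section~\ref{sec:aind} shows that every tensor diagram, hence every Procesi generator of the invariant ring, lies in $\mathcal{U}$, and the coprimality check behind Theorem~\ref{thm:ca} is exactly the input to the starfish-type lemma giving $R_{0,2n,1}\supseteq\mathcal{A}$ and $R_{0,2n,1}\subseteq\mathcal{U}$. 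None of this says that an arbitrary element of $\mathcal{U}$ --- a priori just a rational function that happens to be Laurent in every cluster --- is a polynomial in the generators of Theorem~\ref{thm:procesi} up to frozen denominators. Identities of the form $\mathcal{U}=R$ are delicate and are proved in \cite{FP2} only for special families by substantial additional work; nothing in the present paper supplies such a statement for $R_{0,2n,1}$. Without it, the reduction to Procesi generators plus Lemma~\ref{lem:lin} does not get off the ground even in this concrete case, and Theorem~\ref{thm:off} cannot be ``upgraded'' to $\mathcal{U}$ for free.

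For a general recurrent quiver the proposal is a reformulation rather than a proof: letting $\mathcal{L}$ be the subring generated by linearizable elements (closed under sum and product by Lemma~\ref{lem:lin}) and asking whether $\mathcal{U}$ sits inside a suitable localization of $\mathcal{L}$ is essentially the conjecture itself, and you correctly identify denominator control as the unresolved crux. The one sound observation is that frozen variables are linearizable of order $1$, so inverting them is harmless; but that does not exclude non-frozen denominators in Laurent expressions for elements of $\mathcal{U}$ outside the cluster monomial span, which is exactly where the difficulty lives. In both the special and the general case there is therefore a genuine gap, acknowledged in part by your own closing paragraph.
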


One can also treat the order of linearizability of a specific variable as a measure of complexity of this variable.  We can state the following conjecture, analogous to \cite[Conjecture 9.1]{FP} and \cite[Conjecture 21]{FP2}.
If it is true, then the order of linearizability of a cluster variable should be determined by the minimal number of strands possible in the Kronecker part of the associated tensor diagram. 

\begin{conjecture}
 All cluster variables $X$ in the cluster algebra with the initial seed $\mathfrak T^*$ can be written as (evaluations of) single tensor diagrams.
\end{conjecture}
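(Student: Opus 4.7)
The plan is to proceed by induction on the number of mutations separating a given cluster variable $X$ from the initial seed $\mathfrak T^*$. The base case is immediate: every variable of $\mathfrak T^*$ is, by construction, an evaluation of a single tensor diagram of the form $X_{i,j}^{\alpha,\beta}$ on the annulus with one cut, so we can take these as our starting collection. For the inductive step, suppose we are in a seed whose cluster variables are all represented by single tensor diagrams $D_1,\ldots,D_N$, and we mutate at some variable $z=[[D_z]]$. The new variable is $z' = (M_+ + M_-)/z$, where $M_\pm$ are monomials in the $[[D_i]]$'s. By concatenating the tensor diagrams appearing in $M_+$ and $M_-$ (placing them in disjoint tubular neighborhoods on the annulus), we obtain single tensor diagrams $E_+$ and $E_-$ with $M_\pm = [[E_\pm]]$, and the problem reduces to expressing $([[E_+]] + [[E_-]])/[[D_z]]$ as a single $[[F]]$.

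The central technical step is to exhibit a skein-type identity that realizes this division directly. The strategy is to apply the local skein relation of Figure~\ref{fig:zint7} at the vertices shared between $D_z$ and each of $E_\pm$ to isolate a common $D_z$-subdiagram, so that $[[E_+]] + [[E_-]]$ factors as $[[D_z]]\cdot[[F]]$ for some $F$ built by patching $E_+\setminus D_z$ and $E_-\setminus D_z$ together across the same boundary. This mirrors exactly what is happening in Lemma~\ref{lem:locmut}, where the P\"ucker relation is already an instance of this factorization phenomenon for the belt mutations; the task is to extend that local model to the combinatorics of arbitrary tensor diagrams on the annulus, not merely to the pairs of tensor diagrams realizing narrow triangulations. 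In particular one should show that the alternating sum in Figure~\ref{fig:zint7}, applied to the intersection of $E_\pm$ with the support of $D_z$, produces exactly two surviving terms, matching $M_+$ and $M_-$.

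A parallel roadmap already exists in the work of Fomin and the author on webs in $R_{a,b,0}$ for the disk (i.e.\ the Grassmannian case): there, cluster variables are realized as single web invariants by iteratively applying the skein relations to turn mutation exchange relations into single diagrams. The proposal is to lift that framework from the disk to the annulus with one cut, keeping careful track of how the cut (and hence the element $A \in \SL(V)$) is traversed by each edge fragment. Because the $A$-cut contributes matrix entries multiplicatively at each crossing, the skein manipulations must be performed relative to a choice of cut representative, and homotopies of diagrams across the cut are controlled using $v_i = A v_{i-n}$ and $w_i = A w_{i-n}$.

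The principal obstacle, as in the disk case, is the divisibility step: verifying that the tensor diagram obtained by concatenating $E_+$ and $E_-$ truly factors through $D_z$ under the skein relations, with no anomalous residual terms. This requires a combinatorial understanding of which local configurations of edges around $D_z$ can arise in an arbitrary iterated mutation sequence, and the argument must handle the bookkeeping of signs coming from the odd-$m$ base-labelling convention. A reasonable intermediate goal, before attempting the general case, is to verify the conjecture for all cluster variables reachable by a single mutation from any seed $\mathfrak T_k^*$ on the bipartite belt; this would already force the skein-theoretic factorization to work in a large family of local patterns and suggest the correct general statement.
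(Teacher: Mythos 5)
This statement is not proved in the paper: it is stated as an open conjecture, explicitly analogous to \cite[Conjecture 9.1]{FP} and \cite[Conjecture 21]{FP2}, both of which are themselves open. So there is no proof of the paper's to compare yours against, and your text is, as you yourself signal, a roadmap rather than a proof.

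The genuine gap is exactly the step you flag as the ``principal obstacle'': the claim that the concatenation $E_+ \sqcup E_-$ of the two exchange monomials factors under skein relations as $[[D_z]]\cdot[[F]]$ with $F$ a \emph{single} tensor diagram. Nothing in the paper, nor in the cited work of Fomin and the author, supplies a general mechanism for this. The skein relation of Figure~\ref{fig:zint7} is an alternating sum over all matchings of $k$ strands, and for arbitrary diagrams arising deep in the mutation tree there is no reason the sum collapses to exactly two terms matching $M_+$ and $M_-$; establishing which local configurations can occur after an arbitrary mutation sequence is precisely the content of the conjecture. Indeed, even in the disk case ($SL_3$ webs, $R_{a,b,0}$) the analogous statement --- that every cluster variable is a single web invariant --- is open beyond small cases, so the ``parallel roadmap'' you invoke is itself an unproved program, not a template that can be lifted to the annulus. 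Your base case and the reduction of the inductive step to a division problem are fine as far as they go, but the inductive step is not carried out, so no part of the conjecture is actually established. If you want a tractable first target, your suggested intermediate goal (variables one mutation off the bipartite belt) is reasonable, but note that even there you would need to produce the candidate diagram $F$ explicitly and verify the factorization, including the sign bookkeeping for odd $m$; none of that is done here.
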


\section{Proof of algebraic independence} \label{sec:aind}

In this section we prove Theorem \ref{thm:aind}.
Let $\tilde R_{0,2n+m+1,0}$ be the ring of invariants of $2n+m+1$ vectors: vectors $v_i$, $i=1, \ldots, n$, vectors $w_i$, $i=1, \ldots, n$ and vectors $A^iw_n$, $i = 1, \ldots, m+1$. 

\begin{lemma}
 The Krull dimension of $\tilde R_{0,2n+m+1,0}$ is $2mn+2n$.
\end{lemma}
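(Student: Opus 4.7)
The plan is to identify $\tilde R_{0,2n+m+1,0}$ geometrically as an $SL(V)$-quotient of the image of an explicit morphism, and to compute its dimension via a generic injectivity (cyclic vector) argument. Consider the $SL(V)$-equivariant map
\[
\phi\colon V^{2n}\times SL(V)\longrightarrow V^{2n+m+1}, \quad (v_1,\ldots,w_n,A)\longmapsto (v_1,\ldots,w_n,Aw_n,A^2w_n,\ldots,A^{m+1}w_n),
\]
where $SL(V)$ acts diagonally on vectors on the target and by the diagonal action combined with conjugation on the matrix factor on the source. By the definition given in the lemma, $\tilde R_{0,2n+m+1,0}$ is precisely the image of the pullback homomorphism $\phi^*$ restricted to $\mathcal O(V^{2n+m+1})^{SL(V)}$, so its Krull dimension equals the dimension of the $SL(V)$-quotient of the scheme-theoretic image $Z:=\overline{\phi(V^{2n}\times SL(V))}\subset V^{2n+m+1}$.

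Next, I would prove that $\phi$ is birational onto $Z$. The key observation is that for generic $A\in SL(V)$ the vector $w_n$ is a cyclic vector for $A$, meaning that $w_n, Aw_n, \ldots, A^m w_n$ form a basis of $V$; this is an open, nonempty (hence dense) condition, as its failure is cut out by the vanishing of an explicit determinant. Given the image tuple, one then reconstructs $A$ uniquely as the linear map sending $w_n\mapsto Aw_n,\ Aw_n\mapsto A^2w_n,\ \ldots,\ A^m w_n\mapsto A^{m+1}w_n$. The tuple $(v_1,\ldots,w_n)$ is read off directly from the image, so $\phi$ has degree one onto its image, giving
\[
\dim Z=\dim\bigl(V^{2n}\times SL(V)\bigr)=2n(m+1)+(m+1)^2-1.
\]

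Third, $SL(V)$ acts on $Z$ with trivial generic stabilizer: already any $(m+1)$-tuple of linearly independent vectors in $V$ has trivial $SL(V)$-stabilizer, and a generic point of $Z$ contains such a tuple among its coordinates (precisely $Aw_n,\ldots,A^{m+1}w_n$ with $w_n$ cyclic for $A$). By standard GIT, one concludes
\[
\dim\tilde R_{0,2n+m+1,0}=\dim Z-\dim SL(V)=2n(m+1)=2mn+2n.
\]

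The main potential obstacle is justifying the identification in the first paragraph — that the ring defined in the statement genuinely coincides (up to nilpotents) with $\mathcal O(Z)^{SL(V)}$ rather than with $R_{0,2n+m+1,0}$ itself, which would yield the larger dimension $2mn+2n+1$. The subtle point is that the vectors $A^iw_n$ are not generic members of $V$: they all lie in the cyclic subspace generated by $w_n$ under iteration of $A$, and Cayley--Hamilton forces a codimension-one constraint in $V^{m+1}$ (fixing $\det(A)=1$), which precisely accounts for the discrepancy. One should therefore carefully verify that $\phi^*$ is the correct realization of the invariant ring defined in the lemma, after which the dimension count above is immediate.
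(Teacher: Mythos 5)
Your argument is correct and shares its key step with the paper's --- generic reconstruction of $A$ from the last $m+2$ vectors --- but the dimension bookkeeping is genuinely different. The paper runs the reconstruction as an explicit inverse map, $A=[u_{2n+1},\ldots,u_{2n+m+1}]\,[u_{2n},\ldots,u_{2n+m}]^{-1}$ for a generic tuple, identifies the image as the hypersurface $\langle u_{2n+1},\ldots,u_{2n+m+1}\rangle=\langle u_{2n},\ldots,u_{2n+m}\rangle$ (your ``$\det A=1$'' constraint), and then computes entirely on the invariant side: $\tilde R_{0,2n+m+1,0}$ is the Pl\"ucker algebra $R_{0,2n+m+1,0}$, of classical dimension $(m+1)(2n)+1=2mn+2n+1$, modulo this single invariant relation. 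You compute on the geometric side: $\dim Z=\dim\bigl(V^{2n}\times SL(V)\bigr)$ by birationality (your cyclic-vector argument is exactly the paper's inverse formula in disguise), and then subtract $\dim SL(V)$. Your route buys independence from the classical Pl\"ucker dimension formula, but the step $\dim\bigl(Z/\!\!/ SL(V)\bigr)=\dim Z-\dim SL(V)$ needs one more sentence: trivial generic stabilizer only gives the upper bound $\dim\bigl(Z/\!\!/ G\bigr)\le\dim Z-\dim(\text{generic orbit})$ via Rosenlicht's theorem (compare $\mathbb{C}^{\times}$ scaling $\mathbb{C}^2$ diagonally: the generic action is free, yet the invariant ring is just $\mathbb{C}$). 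For equality you must note that the generic orbit is \emph{closed} --- true here because a generic point of $Z$ is a spanning tuple of vectors, and $SL(V)$-orbits of spanning tuples are closed --- so that polynomial invariants separate generic orbits. Two smaller remarks: your closing worry about identifying $\tilde R_{0,2n+m+1,0}$ with $\mathcal{O}(Z)^{SL(V)}$ is unfounded, since linear reductivity makes taking $SL(V)$-invariants exact and hence the image of $\phi^*$ on invariants is exactly $\mathcal{O}(Z)^{SL(V)}$; and the codimension-one constraint on the image is not a Cayley--Hamilton phenomenon (a linear dependence among $m+2$ vectors in $V$ is automatic) but precisely the condition $\det A=1$, as your own parenthetical indicates.
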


\begin{proof}
 Starting with a generic collection of vectors $u_1, \ldots, u_{2n+m+1} \in \mathbb C^{m+1}$ one can consider the map in the reverse direction, assigning $$v_i = u_i; \;\; w_i = u_{i+n}; \;\; A = {[u_{2n+1}, \ldots, u_{2n+m+1}]}{[u_{2n}, \ldots, u_{2n+m}]}^{-1},$$
 where $[\;, \ldots, \;]$ denotes the matrix with specified columns. There is only one relation one needs to impose to get a generic set of vectors $v,w$ and a generic element $A \in SL_{m+1}$:
 $$\langle u_{2n+1}, \ldots, u_{2n+m+1} \rangle = \langle u_{2n}, \ldots, u_{2n+m}\rangle.$$ 
 Since $R_{0,2n+m+1,0}$ is the standard Pl\"ucker algebra, its dimension is well-known to be $$(m+1)((2n+m+1)-(m+1))+1 = 2mn + 2n +1.$$ Since we impose one algebraic relation, the dimension of $\tilde R_{0,2n+m+1,0}$ is one smaller than that of $R_{0,2n+m+1,0}$, as desired.  
\end{proof}

Now, in order to show that the $2mn+2n$ variables in the seed $\mathfrak T^*$ are algebraically independent, it suffices to prove that all generators of $\tilde R_{0,2n+m+1,0}$ (given by Theorem \ref{thm:procesi}) can be expressed as rational functions in elements of $\mathfrak T^*$.
All of those generators are essentially determinants and can be presented by tensor diagrams in an annulus, as described in Section \ref{sec:ann}. Therefore, the claim follows from the following stronger statement. 

\begin{theorem}
 Any tensor diagram $D$ on an annulus with $2n$ marked black vertices as above lies in the upper cluster algebra associated with $\mathfrak T^*$. In particular, it can be expressed as a Laurent polynomial in elements of $\mathfrak T^*$. 
\end{theorem}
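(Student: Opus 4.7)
The plan is to reduce the statement to the Laurent phenomenon for the cluster structure of a Grassmannian, obtained by lifting tensor diagrams to the universal cover of the annulus. I would organize the argument in three stages, mirroring the strategy used for disks in \cite{FP, FP2}.

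First, I would reduce to a manageable class of tensor diagrams. Using the skein relation of Figure~\ref{fig:zint7} repeatedly, any tensor diagram on the annulus is an alternating sum of diagrams with no internal vertices, where each edge connects two boundary vertices (possibly crossing the $A$-cut). Since the upper cluster algebra is closed under addition and multiplication, it suffices to establish the Laurent property for a single such reduced diagram. Each reduced diagram evaluates, up to sign and the normalization factor, to a product of volume forms of the shape $\langle A^{t_1} u_{i_1}, \ldots, A^{t_{m+1}} u_{i_{m+1}}\rangle$, where each $u$ is one of the $v_i$ or $w_j$.

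Second, I would lift to the universal cover. Unrolling the annulus along the cut yields an infinite strip with marked points $\{v_i\}_{i\in\ZZ}$ on one side and $\{w_j\}_{j\in\ZZ}$ on the other, subject to $v_{i+n}=Av_i$ and $w_{j+n}=Aw_j$. After this unrolling, every reduced tensor diagram becomes a genuine Plücker monomial in a Grassmannian $\operatorname{Gr}(m+1, N)$ for $N$ large enough to accommodate its support. Moreover, the variables $X_{i,j}^{\alpha,\beta}$ of $\mathfrak T^*$ lift to Plücker coordinates, and the Pl\"ucker identity used in the proof of Lemma~\ref{lem:locmut} shows that $\mu_+\mu_-$-evolution on the annulus corresponds to mutation in the cluster structure on $\operatorname{Gr}(m+1,N)$ due to Scott. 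Hence the lift of $\mathfrak T^*$ extends to a seed in the Grassmannian cluster algebra, and by the Laurent phenomenon every Plücker monomial, in particular the lift of our reduced diagram, is a Laurent polynomial in that seed.

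Third, I would descend back to the annulus and confirm upper cluster algebra membership. The descent replaces $A^n$-translates of vectors by the action of $A$, so a Laurent expression on the cover becomes a Laurent expression in $\mathfrak T^*$ once we verify that no denominator outside the coordinate ring of $R_{0,2n,1}$ is introduced; this is a matter of tracking cancellations and follows the scheme of \cite[Section 3]{FP}. Upper cluster algebra membership then requires checking that the resulting Laurent expression is also Laurent in every seed adjacent to $\mathfrak T^*$ by a single mutation, which by Lemma~\ref{lem:locmut} and the Starfish lemma of \cite{FP} reduces to the same Grassmannian Plücker identities.

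The main obstacle will be the third stage: ensuring that the descent is clean, that sign conventions (subtle for odd $m$) propagate correctly through the skein reduction and the Grassmannian lift, and that the Starfish hypotheses (in particular, coprimality of variables across single mutations) are verified in the annular setting. This coprimality check is essentially the technical heart of the proof and is what was postponed from the main argument of the paper.
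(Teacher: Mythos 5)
Your strategy differs from the paper's, and its central step has a genuine gap. The paper does not lift to a Grassmannian: it proves Laurentness directly by multiplying $[D]$ by a sufficiently large monomial in the variables $X_{i,j}^{\alpha,\beta}$ planted on a diagonal $v_iw_j$ and then invoking an explicit local relation that removes the crossings of $D$ with that diagonal one at a time, until every piece of the diagram is confined to a single narrow triangle and hence factors as a monomial in the seed. That local relation is itself produced as the composite of a sequence of $m$ mutations in an auxiliary quiver planted on the quadrilateral $u\,v_i\,u^*\,w_j$ (Figures \ref{fig:zint14} and \ref{fig:zint15}), each step verified by the skein relation of Figure \ref{fig:zint7}. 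This crossing-removal identity is the technical heart of the proof, and nothing in your proposal supplies a substitute for it.

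The failure point is your second stage. The lift of $\mathfrak T^*$ to a window of $N$ columns on the universal cover is \emph{not} a seed of $\operatorname{Gr}(m+1,N)$ in Scott's cluster structure: a narrow triangulation carries $m$ mutable variables per diagonal, and for $m\geq 2$ this count does not match the rank of a Grassmannian seed; more fundamentally, the ambient ring here has Krull dimension $2mn+2n$ (the lemma of Section \ref{sec:aind}), far below $\dim\operatorname{Gr}(m+1,N)$ for the $N$ needed to accommodate high powers of $A$, because the columns satisfy $v_{i+n}=Av_i$, $w_{j+n}=Aw_j$ and so are not generic. One could complete the lifted collection to a genuine Grassmannian seed, but the Laurent phenomenon would then express the target Pl\"ucker coordinate in terms of that larger seed, whose extra variables are not in $\mathfrak T^*$ and do not disappear under descent; so no Laurent expression in $\mathfrak T^*$ results. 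The Pl\"ucker relation in Lemma \ref{lem:locmut} only shows that individual exchange relations are Grassmannian identities, which is much weaker than embedding the whole annular cluster structure into a finite Grassmannian one. Two secondary issues: your first reduction cannot yield diagrams ``with no internal vertices'' (an edge joining two black boundary vertices is not $SL(V)$-invariant; the correct reduction, via Theorem \ref{thm:procesi}, is to products of volume forms, each retaining an internal vertex, together with traces of powers of $A$, which also need an argument); and the adjacent-seed check required for upper-cluster-algebra membership is handled in the paper by repeating the same crossing-removal argument, not by coprimality alone.
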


\begin{proof}
 The proof is essentially verbatim to that of \cite[Theorem 16]{FP2}. It suffices to argue Laurentness for a seed and a collection of adjacent seeds. We shall argue it for the initial seed $\mathfrak T^*$, the argument for the adjacent seeds is similar. We need to show that by repeatedly 
 multiplying $[D]$ with elements of $\mathfrak T^*$ we can get a linear combination of monomials in elements of $\mathfrak T^*$. Let $v_i w_j$ be a diagonal connecting two marked points. The idea is to multiply $[D]$ by a sufficiently large monomial in $X_{i,j}^{\alpha, \beta}$-s. 
 
 Like in the proof of \cite[Theorem 16]{FP2}, we need to exhibit a local relation that allows one to get rid of crossings of $D$ with $v_i w_j$ one by one. Once this is done, all the resulting tensor diagrams are going to be contained within individual triangles of $T^*$. This can be shown to imply that they
 factor into variables of $\mathfrak T^*$ sitting on the sides of the individual triangles. 
   \begin{figure}[ht]
    \begin{center}
\vspace{-.1in}
\epsfig{file=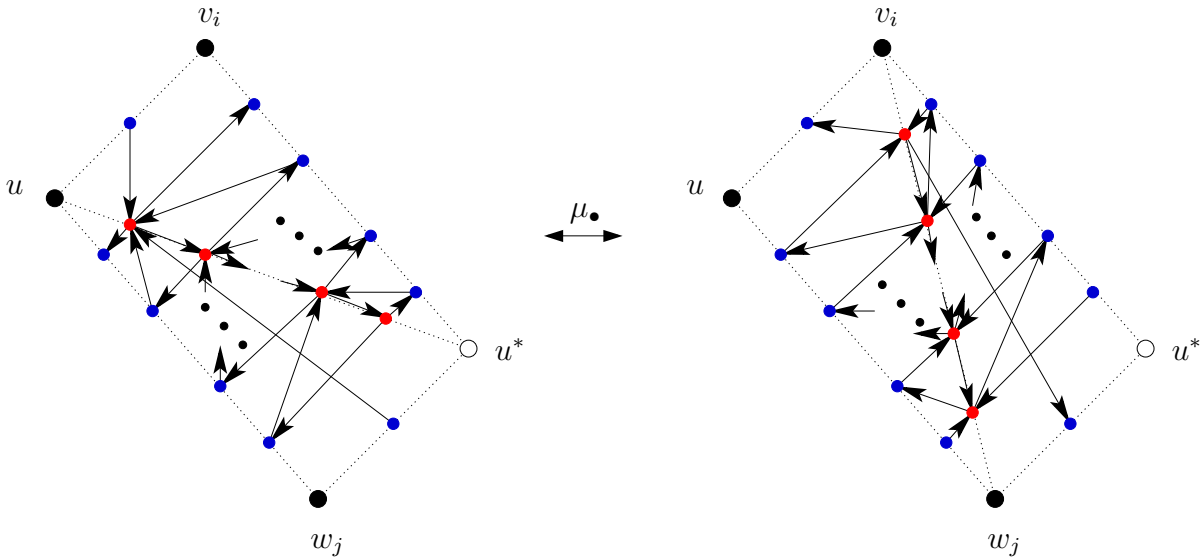, scale=1}
\vspace{-.1in}
    \end{center} 
    \caption{A sequence of mutations creating the needed local relation.}
    \label{fig:zint14}
\end{figure} 
  Consider an arc $uu^*$ of $D$ crossing the diagonal $v_i w_j$. Plant the following variables on the sides of quadrilateral $u v_i u^* w_j$:
 \begin{itemize}
  \item $\langle u^*, w_j \rangle$ on the side $u^*w_j$;
  \item $\langle u, v_i, \ldots, v_{i+m} \rangle$ on side $u v_i$;
  \item $\langle u, v_i, \ldots, v_{i+\alpha-1}, w_j, \ldots,w_{j+m-\alpha-1} \rangle$, $\alpha = 0, \ldots, m-1$ on the side $u w_j$, in that order from $w_j$ to $u$;
  \item $\langle v_i, u^* \rangle$, $[[v_i, \ldots, v_{i+\alpha-1}],u^*,[w_j, \ldots, w_{j+m+1-\alpha}]]$, $\alpha = 2, \ldots, m$ on the side $v_i u^*$, in that order from $u^*$ to $v_i$.
\end{itemize}
Now, consider two triangulations of the quadrilateral $v_i u^* w_j u$. The first triangulation is with diagonal $uu^*$. Plant the following variables:
\begin{itemize}
 \item $\langle u, u^* \rangle$, $[[u, v_i, \ldots, v_{i+\alpha-2}],u^*,[w_j, \ldots, w_{j+m+1-\alpha}]]$, $\alpha = 2, \ldots, m$ on the diagonal $u u^*$, in that order from $u^*$ to $u$.
\end{itemize}
The second triangulation is with diagonal $v_i w_j$. Plant the following variables:
\begin{itemize}
 \item $\langle v_i, \ldots, v_{i+\alpha}, w_j, \ldots,w_{j+m-\alpha-1} \rangle$, $\alpha = 0, \ldots, m-1$ on the diagonal $v_i w_j$, in that order from $w_j$ to $v_i$.
\end{itemize}
Create a quiver on the created vertices as shown in Figure~\ref{fig:zint14}. We claim that the sequence $\mu_{\bullet}$ of $m$ mutations at variables on the diagonal $u u^*$ in the order from $u^*$ to $u$ changes one thus created seed into the other. 

  \begin{figure}[ht]
    \begin{center}
\vspace{-.1in}
\epsfig{file=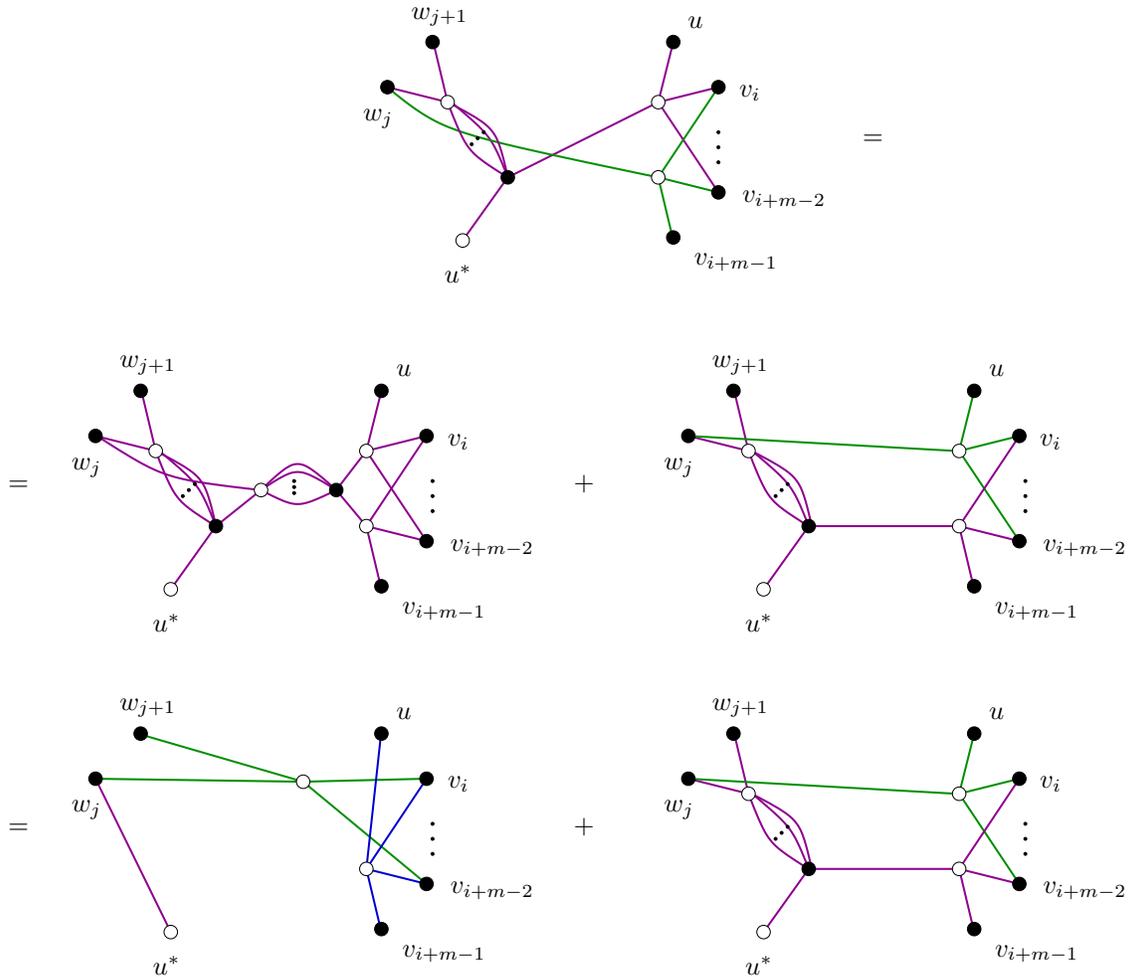, scale=1}
\vspace{-.1in}
    \end{center} 
    \caption{The last mutation in the sequence $\mu_{\bullet}$ verified using tensor diagrams.}
    \label{fig:zint15}
\end{figure} 
 This is easily verified using the skein relation for tensor diagrams. For example, the last mutation in the sequence $\mu_{\bullet}$ has form 
 $$[[u, v_i, \ldots, v_{i+m-2}],u^*,[w_j, w_{j+1}]] \; \langle v_i, \ldots, v_{i+m-1}, w_j \rangle =$$ $$= \langle u, w_j \rangle \; \langle v_i, \ldots, v_{i+m-2}, w_j, w_{j+1} \rangle \; \langle v_i, \ldots, v_{i+m-1} \rangle \; + $$    
 $$+ [[v_i, \ldots, v_{i+m-1}],u^*,[w_j, w_{j+1}]]  \; \langle u, v_i, \ldots, v_{i+m-2}, w_j \rangle$$
 and is shown in Figure~\ref{fig:zint15}. Here each tensor diagram $D$ denotes the associated normalized invariant $[[D]]$. 
 
 Note that for odd $m$ the formulas hold only with the correct choice of sign for each tensor diagram. However, since the goal of the argument is to 
 show that there exists a Laurent expression for the variable $\langle u^*, u \rangle$ in terms of the variables of the other seed, the exact signs do not matter. This goal is achieved, as we can reverse $\mu_{\bullet}$ and thus obtain the 
 needed Laurent expression for $\langle u^*, u \rangle$. This completes the argument, as all the variables in the second triangulation represent tensor diagrams that do not cross $v_i w_j$.  
\end{proof}
 


\end{document}